 \newtheorem{thm}{Theorem}[section]
 \newtheorem{cor}[thm]{Corollary}
 \newtheorem{lem}[thm]{Lemma}
 \newtheorem{prop}[thm]{Proposition}
 \newtheorem{defn}[thm]{Definition}
 \newtheorem{rem}[thm]{Remark}
\numberwithin{equation}{section}
\def\Hil{\mathcal{H}}
\newcommand{\ltfun}[2]{\sum \limits_{#1} \bigoplus #2}
\def\ltiv{\ltfun{i\in I}{V_i}}
\def\ltiw{\ltfun{i\in I}{W_i}}
\newcommand{\range}[1]{\mathsf{ran}\left( #1 \right)}
\newcommand{\kernel}[1]{\mathsf{ker}\left( #1 \right)}
\newcommand{\finspan}[1]{\mathsf{span}\left( #1 \right)}
\newcommand{\clspan}[1]{\overline{\finspan{#1}}}
\begin{document}

\title{The invertibility of $U$-fusion cross Gram matrices of operators}

\author[author1]{Mitra Shamsabadi}
\ead{mi.shamsabadi@hsu.ac.ir}

\author[author1]{Ali Akbar Arefijamaal}

\author[author2]{Peter Balazs\corref{mycorrespondingauthor}}
\cortext[mycorrespondingauthor]{Corresponding author}
\ead{peter.balazs@oeaw.ac.at}

\address[author1]{Department of Mathematics and Computer Sciences, Hakim Sabzevari University, Sabzevar, Iran}
\address[author2]{Acoustics Research Institute, Austrian Academy of Sciences, Vienna, Austria}

\begin{abstract}
For applications like the numerical solution of physical equations a discretization scheme for operators is necessary.
Recently frames have been used for such an operator representation.
In this paper, we apply fusion frames for this task. We interpret the operator representation using fusion frames as a generalization of fusion Gram matrices. We present the basic definition of  $U$-fusion cross Gram  matrices of operators for a bounded operator $U$. We give sufficient conditions for their (pseudo-)invertibility and present explicit formulas for the inverse.
In particular, we characterize  fusion Riesz bases and fusion orthonormal bases by such matrices.
Finally, we look at which perturbations of fusion Bessel sequences preserve the invertibility of the  fusion Gram  matrix of operators.
\end{abstract}

\maketitle

\section{Introduction and motivation}
For the representation (and modification) of functions a standard approach is using orthonormal bases (ONBs).
It can be hard to find a 'good' orthonormal basis, in the sense that it sometimes cannot fulfill given   properties, as formally expressed e.g. in the Balian-Low theorem \cite{gr01}.
For solving this problem, frames were introduced by Duffin and Schaeffer \cite{duffschaef1} and widely developed by many authors \cite{befe01,ch08,daubgromay86,feistro1}.
In recent years, frames have been the focus of active research, both in theory \cite{ALDROUBI20171121,
BODMANN2010397,FUHR2019563} and applications \cite{framepsycho16,boelc1,colgaz10}.
Also, several generalizations have been investigated, e.g. \cite{ALDROUBI20081667,antoin2,jpaxxl09,spexxl14,Sun2006437}, among them fusion frames \cite{caskut04,cakuli08,ga07}, which are the topic of this paper.

For a numerical treatment of operator equations, used for example
for solving integral equations in acoustics \cite{Kreuzeretal09},
the involved operators have to be discretized to be handled
numerically. The (Petrov-)Galerkin approach \cite{Gauletal03} is a
particular and well-known way for this discretization. For
an operator $O,$ the matrix $M$
defined by $M_{k,l} = \left< O \psi_{l}, \phi_{k}
\right>$ is called the matrix corresponding to the
operator $O$, or the system matrix. The standard way for the
discretization of operators is using bases \cite{gohberg1}, but
recently the general theory for frames has been developed
\cite{xxlframoper1,xxlgro14}. Frames were also used in numerics
\cite{harbr08}, in particular in an adaptive approach
\cite{Dahlkeetal07c,Stevenson03}. In \cite{xxlriek11,xxlrieck11}
sufficient and necessary conditions of the invertibility of such
matrices is investigated.
Note that the system matrix of the identity is the cross Gram matrix of the two sequences $\{\psi_k\}_{k\in I}$ and $\{\phi_k\}_{k\in I}$. Therefore, in \cite{mitraG} the concept of matrix representation of operators using frames is reinterpreted as a generalization of the Gram matrix to investigate the inverses.
As the concept of domain decomposition is a particularly relevant topic in this field, the extension of the approach to operator representations to fusion frame is very useful \cite{oswald09}.

In this paper, we therefore look at those $U$-fusion cross Gram matrices. In particular, we investigate the (pseudo-)invertibility of $U$-fusion Gram matrices of operators. In Section \ref{sec:prenot0}, we review basic notations and preliminaries. In Section \ref{sec:fusionGram0}, we give necessary as well as sufficient conditions for the {(pseudo-)}invertibility of the $U$-cross Gram matrices, characterize  fusion orthonormal bases and fusion Riesz bases by those properties and give formulas for the (pseudo-)inverses.
Finally, in Section \ref{sec:stabgram0}, some stability results are discussed.

\section{Preliminaries and Notations}\label{sec:prenot0}

Throughout this paper, $\mathcal{H}$ is a separable Hilbert space,
$I$ a countable index set and $I_{\mathcal{H}}$ the identity
operator on $\mathcal{H}$ and $\{e_{i}\}_{i\in I}$  an orthonormal basis for $\mathcal{H}$.
The orthogonal projection on a subspace $V\subseteq\mathcal{H}$ is denoted by $\pi_V$.
We will denote the set of all linear and bounded operators between Hilbert spaces
$\mathcal{H}_{1}$ and $\mathcal{H}_{2}$ by
$B(\mathcal{H}_{1},\mathcal{H}_{2})$ and for
$\mathcal{H}_{1}=\mathcal{H}_{2}=\mathcal{H}$, it is represented
by $B(\mathcal{H})$.
 We denote the range and the null space
of an operator $U$ by $\range{U}$ and $\kernel{U}$, respectively.
For a closed range operator  $U\in B(\Hil_1,\Hil_2)$, the pseudo-inverse of $U$ is defined  the unique
operator $U^{\dag}\in B(\Hil_2,\Hil_1)$ satisfying
\begin{eqnarray*}
\kernel{U^{\dag}}=\range{U}^{\perp},\
\range{U^{\dag}}=\kernel{U}^{\perp},\
\text{and}\
UU^{\dag}U=U.
\end{eqnarray*}
 The operator $U$ has closed range  if and only if $U^*$ has
closed range and
$\left(U^*\right)^{\dag}=\left(U^{\dag}\right)^{*}$, see e.g.
 \cite[ Lemma 2.4.1, Lemma 2.5.2]{ch08}.

 For $0<p<\infty$, the space of all compact operators (which are the closure of finite-rank operators and denoted by $K(\Hil)$) $T$ on
$\mathcal{H}$ such that its singular values $\{\lambda_{n}\}_{n\in I}$ belonging to $\ell^{p}$ is called the Schatten $p$-class of $\mathcal{H}$. It is denoted by $S_{p}(\mathcal{H})$ which is a Banach space with the norm
\begin{eqnarray}\label{norm}
\|T\|_{p}=\left(\sum_{n}|\lambda_{n}|^{p}\right)^{\frac{1}{p}}.
\end{eqnarray}
An operator $T\in B(\mathcal{H})$ is called  \emph{trace class} if $trace(T) :=
\sum_{i\in I}\left\langle Te_i,e_i\right\rangle<\infty,$ for every orthonormal basis $\{e_i\}_{i\in I}$ for $\Hil$.
It is shown that $T$ is trace class if and only if $T\in S_{1}(\Hil)$. Also, the class of \emph{Hilbert-Schmidt operators} of $\mathcal{H}$ is denoted by  $S_{2}(\mathcal{H})$,  and $T\in S_{2}(\Hil)$ if and only if $\|T\|^2_2=\sum_{i\in I}\left\| Te_i\right\|^2<\infty$.
It is well-known that $K(\Hil)$ and $S_p(\mathcal{H})$ are two sided $*$-ideal of
$B(\mathcal{H})$, that is, a Banach algebra under the norm
(\ref{norm}) and the finite rank operators are dense in
$(S_p(\mathcal{H}), \|.\|_p)$. Moreover, for $T\in
S_p(\mathcal{H})$, one has $\|T\|_p = \|T^*\|_p$ and $ \|T\|\leq\|T\|_p$.
If $S_1\in B(\mathcal{H},\mathcal{H}_1)$ and $S_2\in B(\mathcal{H}_2,\mathcal{H})$, then $\|S_1 T\|_p
\leq\|S_1\|\|T\|_p$ and $\|T S_2\|_p\leq\|S_2\|\|T\|_p$.
For more information about these operators, see
\cite{gohberg1,pitch,schatt1,weidm80}.

\subsection{Fusion frames}

We now review some definitions and primary results
 of fusion frames. For more information see \cite{caskut04,cakuli08,ga07}.

For each sequence $\{W_{i}\}_{i\in I}$ of closed subspaces in $\mathcal{H}$, the space
\begin{equation*}
\left(\sum_{i\in I}\bigoplus W_{i}\right)_{\ell^{2}}=\left\{\{f_{i}\}_{i\in I}: f_{i}\in W_{i}, \sum_{i\in I}\|f_{i}\|^{2}<\infty \right\},
\end{equation*}
with the inner product
\begin{equation*}
\bigg\langle\{f_{i}\}_{i\in I},\{g_{i}\}_{i\in I} \bigg\rangle=\sum_{i\in I}\langle f_{i},g_{i}\rangle,
\end{equation*}
is a Hilbert space.

We now give the central definition of fusion frames:
\begin{defn}
Let $\{W_{i}\}_{i\in I}$ be a family of closed subspaces of $\mathcal{H}$ and $\{\omega _{i}\}_{i\in I}$
be a family of weights, i.e. $\omega_{i}>0$, $i\in I$. The sequence  $\{(W_{i},\omega_{i}
)\}_{i\in I}$
is called a \emph{fusion frame} for $\mathcal{H}$ if there exist constants $0<A_{W}\leq B_{W}<\infty$ such that
\begin{equation*}
A_{W}\|f\|^{2}\leq\sum_{i\in I}\omega _{i}^{2}\|\pi_{W_{i}}f\|^{2}\leq B_{W}\|f\|^{2}, \qquad (f\in \mathcal{H}).
\end{equation*}
The constants $A_{W}$ and $B_{W}$ are called \emph{fusion frame
bounds}. If we  have the upper bound, we call
$\{(W_{i},\omega_{i})\}_{i\in I}$ a $\emph{Bessel fusion
sequence}$. A fusion frame is called $\emph{tight}$, if $A_{W}$
and $B_{W}$ can be chosen to be equal, and $\emph{Parseval}$ if
$A_{W}=B_{W}=1$. If $\omega_{i}=\omega$ for all $i\in I$, the
collection $\{(W_{i},\omega_{i})\}_{i\in I}$ is called
$\emph{$\omega$-uniform}$.
A fusion frame
$\{(W_{i},\omega_{i} )\}_{i\in I}$ is said to be a $
\emph{fusion orthonormal basis}$ if $\mathcal{H}=\bigoplus_{i\in
I}W_{i}$ and it is called  a $\emph{Riesz decomposition}$ of
$\mathcal{H}$ if for every $f\in \mathcal {H} $ there is a unique
choice of $f_{i}\in W_{i}$ such that $f=\sum_{i\in I}f_{i}$. A
family of subspaces is called
 \em{complete} if $\clspan{W_i} = \Hil$.
\end{defn}
It is clear that every fusion orthonormal basis is a Riesz decomposition of $\mathcal{H}$.
Moreover, a family $\{W_{i}\}_{i\in I}$ of closed subspaces of $\mathcal{H}$
 is a fusion orthonormal basis if and only if it is
 a 1-uniform Parseval fusion frame \cite{caskut04}.\\

Furthermore, the \emph{synthesis operator}
 $T_{W}:(\sum_{i\in I}\bigoplus W_{i})_{\ell^{2}}\rightarrow \mathcal{H}$
for a Bessel fusion sequence $ W=\{(W_{i},\omega_{i})\}_{i\in I}$ is
defined by
\begin{equation*} \label{eq:sybthop}
T_{W}(\{f_{i}\}_{i\in I})=\sum_{i\in I}\omega_{i}f_{i}.
\end{equation*}
The adjoint operator $T^{*}_{W}: \mathcal{H}\rightarrow (\sum_{i\in I}\bigoplus W_{i})_{\ell^{2}}$ which is called the $\emph{analysis operator}$
is given by
\begin{equation*}
T^{*}_{W}f=\{\omega_{i}\pi _{W_{i}}f\}_{i\in I}, \qquad  (f\in \mathcal{H}).
\end{equation*}
Both are bounded by $\sqrt{B_W}$.

If $ W=\{(W_{i},\omega_{i}
)\}_{i\in I}$ is a fusion frame, the $\emph{fusion frame operator}$ $S_{W}:\mathcal{H}\rightarrow \mathcal{H}$, which is defined by $S_{W}f=T_{W}T^{*}_{W}f=\sum_{i\in I}\omega_{i}^{2}\pi _{W_{i}}f$,
 is  bounded (with bound $B_W$), invertible and positive \cite{caskut04,ga07}.

Every Bessel fusion sequence
 $V=\{(V_{i},\upsilon_{i})\}_{i\in I}$ is called a $\emph{G\v{a}vru\c{t}a-dual}$
  of  $W=\{(W_{i},\omega_{i})\}_{i\in I}$, if
\begin{equation*}\label{sec:reconstr1}
f=\sum_{i\in I}\omega_{i}\upsilon_{i}\pi_{V_{i}}S_{W}^{-1}\pi_{W_{i}}f,
\qquad (f\in \mathcal{H}),
\end{equation*}
 for more details see \cite{ga07}. From here on, for simplicity we say dual instead of  G\v{a}vru\c{t}a-dual.  The sequence of subspaces $\widetilde{W} := \left\{\left(S_{W}^{-1}W_{i},\omega_{i}\right)\right\}_{i\in I}$ is also a fusion frame for $\mathcal{H}$ and a dual of $W,$ called the \emph{canonical dual} of $W$ \cite{caskut04,ga07}.
Rephrasing that, a Bessel fusion sequence $V=\{(V_{i},\upsilon_{i})\}_{i\in I}$ is a dual of a
fusion frame $ W=\{(W_{i},\omega_{i})\}_{i\in I}$ if and only if
\begin{equation} \label{sec:duality}
T_{V}\phi_{VW}T^{*}_{W}=I_{\mathcal{H}} ,
\end{equation}
where the bounded operator $\phi_{VW}:(\sum_{i\in I}\bigoplus
W_{i})_{\ell^{2}}\rightarrow (\sum_{i\in I}\bigoplus
V_{i})_{\ell^{2}}$ is given by
\begin{equation}\label{phi}
\phi_{VW}(\{f_{i}\}_{i\in I})=\{\pi_{V_{i}}S_{W}^{-1}f_{i}\}_{i\in I}
\end{equation}
  and $\left\|\phi_{VW}\right\|\leq\left\|S_{W}^{-1}\right\|$. Also, $V$ is called a \textit{pseudo-dual} of $W$ if $T_{V}\phi_{VW}T^{*}_{W}$ is an invertble operator, see \cite{ChAppro10} for discrete case.
Another approach to duality \cite{hemo14,behemoza14} uses any bounded operator $O:(\ltiw)_{\ell^2} \rightarrow (\ltiv)_{\ell^2}$. Starting with two fusion frames the duality is defined analogously to \eqref{sec:duality}, i.e.
$T_{V} \, O \, T^{*}_{W}=I_{\mathcal{H}}$.
 We stick to the G\v{a}vru\c{t}a duals, but all results herein can be adapted to this other definition of duality.

 Let $\{W_{i}\}_{i\in I}$ be a family of closed subspaces of $\mathcal{H}$ and $\{\omega_{i}\}_{i\in I}$  a family of weights. We say that $\{(W_{i}, \omega_{i})\}_{i\in I}$ is a \emph{fusion Riesz basis} for $\mathcal{H}$ if
 $\clspan{W_i}=\mathcal{H}$ and there exist constants $0<C\leq D<\infty$ such that for each finite subset $J\subseteq I$   and all $(f_{j}\in W_{j}, j\in J)$ we have
\begin{equation}\label{defriesz}
C\sum_{j\in J}\|f_{j}\|^{2}\leq \left\|\sum_{j\in J}\omega_{j}
f_{j}\right\|^2\leq D\sum_{j\in J}\|f_{j}\|^{2}.
\end{equation}

\begin{rem} \label{rem:Riszdec}  Note that weights are not included in the definition of the Riesz decomposition. Obviously, we have that $f=\sum_{i\in I}f_{i}$ with unique $f_i$s, if and only if  $f=\sum_{i\in I}w_i f_{i}$ with the same uniqueness.
\end{rem}

The next theorem explores fusion Riesz bases with respect to local frames and their operators.
\begin{thm}\cite{caskut04}\label{4}
Let $\{W_{i}\}_{i\in I}$ be a fusion frame for $\mathcal{H}$ and $\{e_{ij}\}_{j\in J_{i}}$ be an orthonormal basis for $W_{i}$, for each $i\in I$. Then the following conditions are equivalent:
\begin{enumerate}
\item[(1)] $\{W_{i}\}_{i\in I}$ is a Riesz decomposition of $\mathcal{H}$.
\item[(2)] The synthesis operator $T_{W}$ is one to one.
\item[(3)] The analysis operator $T^{*}_{W}$ is onto.
\item[(4)] $\{W_{i}\}_{i\in I}$ is a fusion Riesz basis for $\mathcal{H}$.
\item[(5)] $\{e_{ij}\}_{i\in I, j\in J_{i}}$ is a Riesz basis for $\mathcal{H}$.
\end{enumerate}
\end{thm}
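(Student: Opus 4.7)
The plan is to exploit the fact that once $\{W_i\}_{i\in I}$ is a fusion frame, the identity $S_W = T_W T_W^*$ with $S_W$ invertible forces $T_W$ to be surjective and hence to have closed range. This collapses the operator-theoretic parts of the equivalence to clean closed-range duality statements, and I would organize the argument in three blocks: (2)$\Leftrightarrow$(3), (1)$\Leftrightarrow$(2)$\Leftrightarrow$(4), and (4)$\Leftrightarrow$(5).

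For the first block, (2)$\Leftrightarrow$(3) follows directly: since $T_W$ is surjective with closed range, so is $T_W^*$, and $\range{T_W^*} = \kernel{T_W}^\perp$; hence $T_W$ is one-to-one precisely when $T_W^*$ is onto. For (1)$\Leftrightarrow$(2), surjectivity of $T_W$ furnishes for each $f\in\Hil$ a sequence $\{g_i\}$ in $\left(\sum_{i\in I}\bigoplus W_i\right)_{\ell^2}$ with $f=\sum_{i\in I}\omega_i g_i$; setting $f_i:=\omega_i g_i\in W_i$ gives a decomposition $f=\sum_i f_i$, and by Remark \ref{rem:Riszdec} uniqueness of such a decomposition is equivalent to injectivity of $T_W$.

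For (2)$\Leftrightarrow$(4), one direction is immediate: the lower Riesz bound in (\ref{defriesz}) shows $T_W$ is bounded below on finitely supported sequences, hence (by a density and closed range argument) on all of its domain, so $T_W$ is injective. Conversely, injectivity of $T_W$ together with surjectivity from the fusion frame property makes $T_W$ bijective; the open mapping theorem then supplies a bounded two-sided inverse, and evaluating this inverse on finitely supported sequences yields the uniform Riesz bounds. For (4)$\Leftrightarrow$(5), I would expand $f_i=\sum_j a_{ij}e_{ij}$ so that $\sum_i\omega_i f_i=\sum_{i,j}\omega_i a_{ij}e_{ij}$ and $\|f_i\|^2=\sum_j|a_{ij}|^2$; applying (\ref{defriesz}) to a sequence supported at a single index $i$ first shows $C\leq\omega_i^2\leq D$, so the $\omega_i$ are uniformly controlled and the fusion Riesz bounds translate directly into Riesz basis bounds on $\{e_{ij}\}_{i\in I,\,j\in J_i}$, and conversely.

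The main obstacle I anticipate is the quantitative step (2)$\Rightarrow$(4), where uniform Riesz constants must be extracted from mere injectivity of $T_W$; this rests on the open mapping theorem and on respecting the $\ell^2$-direct sum structure of the domain. The weight bookkeeping in (4)$\Leftrightarrow$(5) is routine once one observes that the fusion Riesz basis property alone already forces the weights to be bounded above and bounded away from zero.
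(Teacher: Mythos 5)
A preliminary remark: the paper does not prove Theorem \ref{4} at all; it is quoted from \cite{caskut04}, and the only related in-paper argument is the proof of Proposition \ref{4new}, which itself invokes Theorem \ref{4} for the equivalence of (1)--(3). So your proposal has to stand on its own. Its architecture is the natural one, and the blocks (2)$\Leftrightarrow$(3), (4)$\Rightarrow$(2), (2)$\Rightarrow$(4) (just add that completeness comes from the lower fusion frame bound) and (4)$\Rightarrow$(5) are fine. The genuine gap is in (2)$\Rightarrow$(1): injectivity of $T_W$ gives uniqueness only among decompositions $f=\sum_i f_i$ whose components satisfy $\sum_i\|f_i\|^2<\infty$, i.e.\ come from an element of $\left(\sum_{i\in I}\bigoplus W_i\right)_{\ell^2}$, whereas the definition of a Riesz decomposition quantifies over \emph{all} choices $f_i\in W_i$ with $\sum_i f_i$ convergent, and such a decomposition is not a priori square-summable. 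Remark \ref{rem:Riszdec} only says that the weights are irrelevant; it does not address this point, so ``uniqueness is equivalent to injectivity of $T_W$'' is precisely the step that still needs an argument. The repair fits your own plan: first obtain (5) (via (2)$\Rightarrow$(4)$\Rightarrow$(5)), then map $\{e_{ij}\}_{i\in I, j\in J_i}$ to an orthonormal basis by a bounded invertible operator; since blocks with different $i$ use disjoint basis vectors, the Cauchy property of the partial sums of $\sum_i f_i$ forces $\sum_i\|f_i\|^2<\infty$, after which your $\ell^2$-uniqueness argument applies.

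A smaller but real problem is the direction (5)$\Rightarrow$(4), which you treat as routine bookkeeping: there you may not apply \eqref{defriesz} to singletons to bound the weights, because \eqref{defriesz} is the conclusion in that direction, not a hypothesis. The upper bound $\omega_i^2\le B_W$ does follow from the Bessel bound (test on a unit vector of $W_i$), but a lower bound on the weights needs a separate argument; for instance, test the lower fusion frame bound on the dual Riesz basis vectors $\tilde e_{i_0j_0}$: biorthogonality and the fact that $\{e_{ij}\}_{j\in J_i}$ is an orthonormal basis of $W_i$ give $\pi_{W_i}\tilde e_{i_0j_0}=\delta_{ii_0}e_{i_0j_0}$, hence $A_W\le A_W\|\tilde e_{i_0j_0}\|^2\le\omega_{i_0}^2$. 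Alternatively, bypass the weight issue entirely by proving (5)$\Rightarrow$(2) (a vanishing expansion $\sum_{i,j}\omega_i a_{ij}e_{ij}=0$ with $\{a_{ij}\}\in\ell^2$ has zero coefficients, using only $\omega_i\le\sqrt{B_W}$) and then invoking your (2)$\Rightarrow$(4). With these two repairs the proposal is correct; for comparison, the paper's Proposition \ref{4new} handles (4)$\Leftrightarrow$(5) by reducing to \cite[Theorem 3.3.7]{ch08} and does not reprove (1)$\Leftrightarrow$(2)$\Leftrightarrow$(3).
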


For some results we need  a version of Theorem \ref{4} formulated for general family of subspaces:
\begin{prop} \label{4new}
Let $W=\{(W_{i},\omega_i)\}_{i\in I}$ be a family of closed subspaces and $\{f_{ij}\}_{j\in J_{i}}$ be a Riesz basis for $W_{i}$, for each $i\in I$ with bounds $A_i$ and $B_i$, respectively, such that
\begin{eqnarray*}
0<\inf _{i\in I}A_i\leq \sup_{i\in I}B_i<\infty.
\end{eqnarray*}
 Then the following conditions are equivalent:
\begin{enumerate}
\item[(1)] $W$ is a Riesz decomposition of $\mathcal{H}$.
\item[(2)] The synthesis operator $T_{W}$ is bounded and bijective.
\item[(3)] The analysis operator $T^{*}_{W}$ is bounded and bijective.
\item[(4)] $W$ is a fusion Riesz basis for $\mathcal{H}$.
\item[(5)] $\{\omega_if_{ij}\}_{i\in I, j\in J_{i}}$ is a Riesz basis for $\mathcal{H}$.
\end{enumerate}
\end{prop}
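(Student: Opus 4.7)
The plan is to reduce all five conditions to properties of a single synthesis operator by building a global isomorphism $\Psi_{f}$ between $\ell^{2}$ and the fusion Hilbert space out of the local Riesz bases. For each $i\in I$ fix an orthonormal basis $\{e_{ij}\}_{j\in J_{i}}$ of $W_{i}$ and let $R_{i}\in B(W_{i})$ be the invertible operator with $R_{i}e_{ij}=f_{ij}$. The Riesz bounds give $\|R_{i}\|\le\sqrt{B_{i}}$ and $\|R_{i}^{-1}\|\le 1/\sqrt{A_{i}}$, so the uniformity hypothesis $0<\inf A_{i}\le\sup B_{i}<\infty$ makes $\sup_{i}\|R_{i}\|$ and $\sup_{i}\|R_{i}^{-1}\|$ finite.

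Define
\[
\Psi_{f}:\ell^{2}\bigl(\{(i,j):i\in I,\, j\in J_{i}\}\bigr)\longrightarrow\Bigl(\sum_{i\in I}\bigoplus W_{i}\Bigr)_{\ell^{2}},\qquad
\Psi_{f}(\{c_{ij}\})=\Bigl\{\sum_{j\in J_{i}}c_{ij}f_{ij}\Bigr\}_{i\in I}.
\]
A termwise application of the Riesz basis estimate yields
\[
\bigl(\inf_{i}A_{i}\bigr)\sum_{i,j}|c_{ij}|^{2}\;\le\;\bigl\|\Psi_{f}(\{c_{ij}\})\bigr\|^{2}\;\le\;\bigl(\sup_{i}B_{i}\bigr)\sum_{i,j}|c_{ij}|^{2},
\]
so $\Psi_{f}$ is bounded and boundedly invertible. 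A direct computation then shows that the vector synthesis operator of the sequence $\{\omega_{i}f_{ij}\}$ factors as $T_{W}\circ\Psi_{f}$.

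With this in hand the equivalences follow. Since $\Psi_{f}$ is an isomorphism, boundedness and bijectivity of $T_{W}$ is equivalent to the same for the synthesis of $\{\omega_{i}f_{ij}\}$, while totality of $\{\omega_{i}f_{ij}\}$ in $\mathcal{H}$ is equivalent to $\clspan{W_{i}}=\mathcal{H}$; this gives (2) $\Leftrightarrow$ (5). The equivalence (2) $\Leftrightarrow$ (3) follows by passing to adjoints. For (2) $\Leftrightarrow$ (1), bijectivity of $T_{W}$ delivers a unique $\{g_{i}\}\in(\sum_{i\in I}\bigoplus W_{i})_{\ell^{2}}$ with $f=\sum \omega_{i}g_{i}$, which by Remark \ref{rem:Riszdec} is exactly a Riesz decomposition; conversely, a Riesz decomposition provides an algebraic inverse to $T_{W}$, and $\sup_{i}B_{i}<\infty$ forces $T_{W}$ to be bounded, so the open mapping theorem upgrades the inverse to a bounded one. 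Finally (2) $\Leftrightarrow$ (4) is the observation that the inequalities in \eqref{defriesz} are precisely the two-sided norm bound on $T_{W}$ restricted to finitely supported tuples, which together with the completeness $\clspan{W_{i}}=\mathcal{H}$ extend by density to a bounded bijective $T_{W}$.

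The main obstacle is the careful use of the uniformity hypothesis $0<\inf A_{i}\le\sup B_{i}<\infty$: it is exactly what promotes the pointwise invertible operators $R_{i}$ to the global isomorphism $\Psi_{f}$, without which (5) need not reflect the intrinsic subspace conditions (1)--(4). A secondary technical point is the direction (1) $\Rightarrow$ (2): one must verify that the uniquely determined local pieces $g_{i}\in W_{i}$ really form an element of the $\ell^{2}$-direct sum and that the resulting map $f\mapsto\{g_{i}\}$ is bounded, which is where the uniform Riesz bounds re-enter.
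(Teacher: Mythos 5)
Your construction of the isomorphism $\Psi_f$ from the local Riesz bases and the factorization of the synthesis operator of $\{\omega_i f_{ij}\}$ as $T_W\circ\Psi_f$ is sound, and it carries the equivalences (2) $\Leftrightarrow$ (5), (2) $\Leftrightarrow$ (3), (2) $\Leftrightarrow$ (4) and the direction (2) $\Rightarrow$ (1) in essentially the same spirit as the paper, which instead relates \eqref{defriesz} directly to the Riesz inequality for $\{\omega_i f_{ij}\}$ and invokes Theorem \ref{4}. The genuine gap is the implication (1) $\Rightarrow$ (2). Your argument there rests on the claim that $\sup_i B_i<\infty$ forces $T_W$ to be bounded; this is false. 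The constants $A_i,B_i$ are bounds of Riesz bases \emph{inside} each single subspace $W_i$ and carry no information about how the subspaces sit relative to one another or about the weights, whereas boundedness of $T_W$ is equivalent to the Bessel fusion condition $\sum_i\omega_i^2\|\pi_{W_i}f\|^2\le B\|f\|^2$, an inter-subspace condition. For example, $W_i=\mathcal{H}$ for all $i$ with $\omega_i=1$ and an orthonormal basis in each $W_i$ gives $A_i=B_i=1$ while $T_W$ is unbounded. So boundedness of $T_W$ has to be extracted from hypothesis (1) itself, not from the local bounds.

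The same misattribution undermines the ``secondary technical point'' you flag at the end: verifying that the unique pieces $g_i$ of a Riesz decomposition satisfy $\{g_i\}_{i\in I}\in\left(\sum_{i\in I}\bigoplus W_i\right)_{\ell^{2}}$ and that $f\mapsto\{g_i\}_{i\in I}$ is bounded is precisely the substance of (1) $\Rightarrow$ (2), and the uniform local Riesz bounds cannot supply it --- they only let you pass between $g_i$ and its coefficient sequence inside $W_i$. You must either invoke Theorem \ref{4} (the Casazza--Kutyniok equivalences), as the paper does after noting that the conditions in play yield the fusion frame property, or give an independent argument, e.g.\ show via closed graph/uniform boundedness that the coordinate maps $f\mapsto g_i$ are bounded projections and then use the unconditionality of the decomposition (a Rademacher-averaging argument) to get $\sum_i\|g_i\|^{2}\asymp\|f\|^{2}$; note that since (1) does not involve the weights (Remark \ref{rem:Riszdec}) while \eqref{defriesz} forces $\sqrt{C}\le\omega_i\le\sqrt{D}$, some control on the weights necessarily enters at exactly this step. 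As written, (1) $\Rightarrow$ (2) is asserted but not proved, and the one reason you do give for it is incorrect.
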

\begin{proof} $(1)$ $\Leftrightarrow$ $(2)$ $\Leftrightarrow$ $(3)$ by Theorem \ref{4} for any family of closed subspaces (as those conditions imply a fusion frame property).

$(4)\Leftrightarrow (5)$ by relating the inequality (\ref{defriesz}) for fusion Riesz basis $W$  to the one for Riesz bases $\{w_{i}f_{ij}\}_{i\in I,j\in J_i}$ see e.g. \cite[Theorem 3.3.7]{ch08}.

(4) $\Leftrightarrow$ (2)
 The equation \eqref{defriesz} is equivalent to $T_W$ being injective, having closed range  and being bounded. By the completeness we have $(4) \Rightarrow (2)$. By the surjectivity of $T_W$ we have completeness and so $(2) \Rightarrow (4)$.

\end{proof}

The following characterizations of fusion Riesz bases will be used frequently in this note,  which is a generalization of \cite[Theorem 2.2]{mitra} to non-uniform fusion frames.

\begin{prop}\label{riesz mitra}
 Let $W=\{(W_{i},\omega_i)\}_{i\in I}$ be a fusion frame in $\mathcal{H}$. Then the following are equivalent:
\begin{enumerate}
\item[(1)] $W$ is a fusion Riesz  basis.
\item[(2)] $S_{W}^{-1}W_{i}\perp W_{j}$ for all $i, j \in I, i\neq j$.
\item[(3)] $\omega_{i}^2\pi_{W_{i}}S_{W}^{-1}\pi_{W_{j}}=\delta_{ij}\pi_{W_{j}}$ for all $i, j \in I$.
\end{enumerate}
 \end{prop}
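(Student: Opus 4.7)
The plan is to split into two equivalences: $(2)\Leftrightarrow (3)$, obtained by direct manipulation of projections, and $(1)\Leftrightarrow (3)$, derived from the uniqueness of representations in a Riesz decomposition (Proposition~\ref{4new}). The guiding identity throughout is $\sum_{i\in I}\omega_i^2\pi_{W_i}S_W^{-1}=I_{\mathcal{H}}$, which is just $S_W S_W^{-1}=I_{\mathcal{H}}$ written out using the definition of $S_W$.

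For $(3)\Rightarrow (2)$ I would divide the $i\neq j$ case of (3) by $\omega_i^2>0$ to get $\pi_{W_i}S_W^{-1}\pi_{W_j}=0$; then for any $w\in W_j$, $\pi_{W_i}(S_W^{-1}w)=\pi_{W_i}S_W^{-1}\pi_{W_j}w=0$, which is (2). For $(2)\Rightarrow (3)$ the off-diagonal case follows by reversing the same computation once one notes that (2) yields $\pi_{W_i}S_W^{-1}\pi_{W_j}=0$ for $i\neq j$. For the diagonal case $i=j$, apply the reconstruction identity to $\pi_{W_j}f$ and use the off-diagonal vanishing just established to kill every summand except $i=j$, leaving $\omega_j^2\pi_{W_j}S_W^{-1}\pi_{W_j}f=\pi_{W_j}f$.

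For $(1)\Rightarrow (3)$, note that by Proposition~\ref{4new} a fusion Riesz basis is a Riesz decomposition, so every $f\in\mathcal{H}$ has a unique representation $f=\sum_{i\in I}f_i$ with $f_i\in W_i$. The canonical reconstruction $f=\sum_{i\in I}\omega_i^2\pi_{W_i}S_W^{-1}f$ supplies one such representation. Applying the uniqueness statement to $f=\pi_{W_j}g$, whose trivially unique decomposition has $j$-th component $\pi_{W_j}g$ and all others zero, identifies $\omega_i^2\pi_{W_i}S_W^{-1}\pi_{W_j}g$ termwise and yields (3).

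Finally, the core direction $(3)\Rightarrow (1)$: I would verify the uniqueness half of the Riesz decomposition property, the existence half being supplied again by the canonical reconstruction. Suppose $\sum_{i\in I}f_i=0$ with $f_i\in W_i$. Applying the bounded operator $\omega_j^2\pi_{W_j}S_W^{-1}$ termwise (justified by norm convergence) and using $f_i=\pi_{W_i}f_i$ together with the relabeled form $\omega_j^2\pi_{W_j}S_W^{-1}\pi_{W_i}=\delta_{ij}\pi_{W_i}$ of (3), every summand vanishes except the one with $i=j$, leaving $f_j=0$. By Proposition~\ref{4new} this gives (1). The main obstacle is really just bookkeeping: being careful about which index carries the weight $\omega^2$ in (3) so that the Kronecker collapse returns precisely $f_j$ and not some other vector.
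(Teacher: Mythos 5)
Your proof is correct, but it takes a genuinely different route from the paper's. The paper proves the cycle through $(1)\Leftrightarrow(2)$ and $(2)\Leftrightarrow(3)$: it imports $(1)\Rightarrow(2)$ from Casazza--Kutyniok, and for $(2)\Rightarrow(1)$ and the diagonal case of $(2)\Rightarrow(3)$ it passes to local orthonormal bases $\{e_{ij}\}_{j\in J_i}$ of the $W_i$, shows that $\{\omega_i e_{ij}\}_{i\in I, j\in J_i}$ is a Riesz basis for $\mathcal{H}$ with frame operator $S_W$ (via biorthogonality with $\{S_W^{-1}\omega_i e_{ij}\}$), and then uses the orthonormal basis $\{S_W^{-1/2}\omega_i e_{ij}\}$ to compute $\left\langle\omega_i^2\pi_{W_i}S_W^{-1}\pi_{W_i}f,g\right\rangle=\left\langle\pi_{W_i}f,g\right\rangle$. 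You instead organize the proof around $(2)\Leftrightarrow(3)$ and $(1)\Leftrightarrow(3)$: the diagonal identity in $(2)\Rightarrow(3)$ falls out of sandwiching $S_WS_W^{-1}=I_{\mathcal{H}}$, i.e. $\pi_{W_j}f=\sum_{i}\omega_i^2\pi_{W_i}S_W^{-1}\pi_{W_j}f$, with the already-established off-diagonal vanishing; and $(1)\Leftrightarrow(3)$ is extracted from the uniqueness statement of the Riesz decomposition (the equivalence of items (1) and (4) in Proposition \ref{4new}, or Theorem \ref{4}), applied on one side to $f=\pi_{W_j}g$ (comparing its trivial decomposition with the canonical one $f_i=\omega_i^2\pi_{W_i}S_W^{-1}f$) and on the other side to a null series $\sum_i f_i=0$, where termwise application of the bounded operator $\omega_j^2\pi_{W_j}S_W^{-1}$ is legitimate by norm convergence. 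This is leaner: no local bases, no $S_W^{-1/2}$, no external citation for $(1)\Rightarrow(2)$, and the uniqueness argument handles arbitrary convergent decompositions without any square-summability caveat. What the paper's detour buys is the reusable intermediate fact that $\{\omega_ie_{ij}\}$ is a Riesz basis with frame operator $S_W$ (the bridge to item (5) of Proposition \ref{4new}), which it exploits again within the same proof; for the proposition as stated, your argument suffices and is self-contained.
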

\begin{proof}
$(1)\Rightarrow (2)$ was proved in \cite[Proposition 4.3]{caskut04}. \\
$(2)\Rightarrow (1)$ Suppose that $\{e_{ij}\}_{j\in J_{i}}$ is
 an orthonormal basis for $W_{i}$, for all $i\in I$. Then
 for any $f\in \mathcal{H}$ we have
 \begin{eqnarray*}
 \sum_{i\in I}\sum_{j\in J_{i}}\left|\left<f,\omega_ie_{ij}\right>\right|^{2}=\sum_{i\in I}\omega_i^2\left\|
 \pi_{W_{i}}f\right\|^{2}.
 \end{eqnarray*}
  It easily follows
  that $\{e_{ij}\}_{i\in I, j\in J_{i}}$ is a weighted frame \cite{xxljpa1} (with weights $\omega_i>0$) for $\mathcal{H}$ with the frame operator $S_{W}$.
  Moreover, by Proposition \ref{4new} it is enough to show that
 $\{\omega_ie_{ij}\}_{i\in I, j\in J_{i}}$
is a Riesz basis for $\mathcal{H}$ or equivalently, that the sequences $\{\omega_ie_{ij}\}_{i\in I,j\in J_{i}}$ and $\{S_{W}^{-1}\omega_ie_{ij}\}_{i\in I,j\in J_{i}}$ are biorthogonal.
This immediately follows from the reconstruction formula
\begin{eqnarray*}
f=\sum_{i\in I}\sum_{j\in J_{i}}\langle f,S_{W}^{-1}\omega_ie_{ij}\rangle \omega_ie_{ij}, \qquad (f\in \mathcal{H}).
\end{eqnarray*}

$(2)\Rightarrow (3)$
Suppose that $\{(W_{i},\omega_i)\}_{i\in I}$ is a fusion frame in $\mathcal{H}$ and
 $f,g\in \mathcal{H}$. By (2) we obtain
\begin{eqnarray*}
\langle\pi _{W_{i}}S_{W}^{-1}\pi _{W_{j}}f,g\rangle=
\langle S_{W}^{-1}\pi_{W_{j}}f,\pi_{W_{i}}g\rangle=0 \qquad  (i\neq j).
\end{eqnarray*}
In particular, suppose $\{e_{ij}\}_{j\in J_{i}}$ is an orthonormal
basis for $W_{i}$, then we know by the argument  above that the sequence $\{\omega_ie_{ij}\}_{i\in I, j\in J_{i}}$
is a Riesz basis for $\mathcal{H}$ with the frame operator $S_W$. Hence, $\{S_{W}^{-1/2}\omega_ie_{ij}\}_{i\in I ,j\in {J_{i}}}$ is an
orthonormal basis for $\mathcal{H}$.
Let $f,g\in \mathcal{H}$ and take
\begin{eqnarray*}
\pi_{W_{i}}f=\sum_{j\in J_{i}}c_{ij}e_{ij},\ \ \ \pi_{W_{i}}g=\sum_{j\in J_{i}}d_{ij}e_{ij},
\end{eqnarray*}
for all $i\in I$, for some $\{c_{ij}\}_{j\in J_{i}}$ and $\{d_{ij}\}_{j\in J_{i}}$ in $\ell^{2}$. Then, for all $i\in I$ we have
\begin{eqnarray*}
\left\langle \omega_i^2\pi_{W_{i}}S_{W}^{-1}\pi_{W_{i}}f,g\right\rangle &=&
\left\langle S_{W}^{-1/2}\omega_i\pi_{W_{i}}f,S_{W}^{-1/2}\omega_i\pi_{W_{i}}g\right\rangle\\
&=&\langle\sum_{j\in J_{i}}c_{ij}S_{W}^{-1/2}\omega_ie_{ij},\sum_{k\in J_{i}}d_{ik}S_{W}^{-1/2}\omega_ie_{ik}\rangle\\
&=&\sum_{j,k\in J_{i}}c_{ij}\overline{d_{ik}}\left\langle S_{W}^{-1/2}\omega_ie_{ij},S_{W}^{-1/2}\omega_ie_{ik}\right\rangle\\
&=&\left\langle\sum_{j\in J_{i}}c_{ij}e_{ij},\sum_{k\in J_{i}}d_{ik}e_{ik}\right\rangle=\left\langle\pi_{W_{i}}f,g\right\rangle.
\end{eqnarray*}
So, $\omega_i^2\pi_{W_{i}}S_{W}^{-1}\pi_{W_{i}}=\pi_{W_{i}}$.

$(3)\Rightarrow (2)$ Let $f\in W_{i}$ and $g\in W_{j}$, where
$i\neq j$. Then
\begin{eqnarray*}
\langle S_{W}^{-1}f,g\rangle
&=&\langle S_{W}^{-1}\pi_{W_{i}}f, \pi_{W_{j}}g\rangle\\
&=&\langle \pi_{W_{j}}S_{W}^{-1}\pi_{W_{i}}f,g\rangle=0.
\end{eqnarray*}
\end{proof}
From the Proposition \ref{riesz mitra}, it easily follows that for a fusion Riesz basis $W$
\begin{equation}\label{**}
\phi_{WW}(f_i)=\{\pi_{W_i}S_{W}^{-1}f_i\}_{i\in I}=\{\pi_{W_i}S_{W}^{-1}\pi_{W_i}f_i\}_{i\in I}=\{\frac{1}{\omega_i^2}f_i\}_{i\in I}.
\end{equation}
Therefore, it is bounded and the operator $f_{i} \mapsto w_{i}^2 f_{i}$ is its bounded inverse.

All items in Proposition  \ref{riesz mitra} include a dependency on weights, for (2) the weight is included in the definition of $S_W$. So, the question arises how dependent the Riesz property is on the considered weights. It is worthwhile to mention that, by the fusion frame definition, the family of weights belongs to $\ell_{+}^{\infty}$  assuming that the subspaces are non-empty, see \cite[Remark 2.4]{behemoza14}.
Moreover, if $W=\{(W_i,w_i)\}_{i\in I}$ is a fusion Riesz basis, then  (\ref{defriesz}) shows that
\begin{eqnarray*}
\sqrt{C}\leq w_i\leq \sqrt{D}, \qquad(i\in I).
\end{eqnarray*}
Using Proposition \ref{4new} the following lemma immediately follows.
\begin{lem} \label{cor:Riestweight0} Let $W = \{(W_{i},\omega_i)\}_{i\in I}$  and $V = \{(W_{i},v_i)\}_{i\in I}$ be a family of subspaces in $\Hil$ with different weights. Then $W$  is a  fusion Riesz basis if and only if $V$ is a fusion Riesz basis.\\
\end{lem}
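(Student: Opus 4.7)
The plan is to reduce the fusion Riesz basis property to a weight-independent condition, namely the Riesz decomposition property, via Proposition \ref{4new}. For each $i \in I$, I would choose an orthonormal basis $\{e_{ij}\}_{j \in J_i}$ of $W_i$; these are Riesz bases with uniform bounds $A_i = B_i = 1$, so the hypothesis of Proposition \ref{4new} (involving $\inf_{i} A_i > 0$ and $\sup_i B_i < \infty$) is trivially satisfied, regardless of which weight family we attach to $\{W_i\}_{i\in I}$.

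Next, I would apply the equivalence (1)$\Leftrightarrow$(4) of Proposition \ref{4new} to $W$: the family $\{(W_i,\omega_i)\}_{i\in I}$ is a fusion Riesz basis if and only if $\{W_i\}_{i\in I}$ is a Riesz decomposition of $\mathcal{H}$. The crucial observation, recorded explicitly in Remark \ref{rem:Riszdec}, is that the Riesz decomposition property depends only on the subspaces $\{W_i\}_{i\in I}$ and not on any accompanying weights. Applying the same equivalence to $V = \{(W_i, v_i)\}_{i\in I}$ shows that $V$ is a fusion Riesz basis if and only if $\{W_i\}_{i\in I}$ is a Riesz decomposition. Chaining the two equivalences through the common middle condition delivers the lemma.

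The main subtlety, rather than a real obstacle, is verifying that Proposition \ref{4new} is legitimately applicable in both directions. This means the two weight sequences $(\omega_i)$ and $(v_i)$ must lie in $\ell^\infty_+$ and be bounded below away from zero, so that the synthesis operators $T_W$ and $T_V$ are bounded. For whichever of $W$ or $V$ is assumed to be a fusion Riesz basis, these bounds are automatic from the estimate $\sqrt{C}\le w_i \le \sqrt{D}$ derived just before the lemma. For the other family one relies on the standing assumption, highlighted in the paragraph preceding the lemma via \cite[Remark 2.4]{behemoza14}, that weights of a fusion system are in $\ell^\infty_+$. The symmetry of the argument in $W$ and $V$ then yields both directions of the equivalence simultaneously.
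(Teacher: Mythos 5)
Your proof takes essentially the same route as the paper: the paper also derives the lemma directly from Proposition \ref{4new}, observing that the Riesz decomposition property in item (1) (cf.\ Remark \ref{rem:Riszdec}) does not depend on the weights, so the equivalence (1)$\Leftrightarrow$(4) applied to each weight family gives the claim. Your additional checks (orthonormal bases of the $W_i$ to satisfy the hypothesis of Proposition \ref{4new}, and the boundedness of the weights) only make explicit what the paper leaves implicit, so the argument is correct and coincides with the paper's.
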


This could also be seen as direct consequence of  Proposition \ref{4new}, as the Riesz decomposition property, e.g. item $(1)$ is independent of the weight.

In the sequel, for a given fusion Riesz basis $W=\{(W_{i}, w_{i})\}_{i \in I}$, we denote by $W'$ the $1$-uniform family of subspaces $\{(W_{i}, 1)\}_{i \in I}$.

 We will use the following criterion for the invertibility
of operators.
\begin{prop} \cite{gohberg1}\label{invOP}
Let $F:\mathcal{H}\rightarrow \mathcal{H}$ be invertible on $\mathcal{H}$.
Suppose that $G:\mathcal{H}\rightarrow \mathcal{H}$ is a bounded
 operator and $\|Gh-Fh\|\leq \upsilon\|h\|$ for all $h\in \mathcal{H}$,
  where $\upsilon\in[0,\frac{1}{\|F^{-1}\|})
$. Then
 $G$ is invertible on $\mathcal{H}$ and
    $G^{-1}=\sum_{k=0}^{\infty}[F^{-1}(F-G)]^{k}F^{-1}$. \end{prop}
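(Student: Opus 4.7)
The plan is to reduce the claim to the classical Neumann series for operators of norm less than one, by factoring $G$ appropriately through $F$. The key rewriting is
\begin{equation*}
G = F - (F-G) = F\bigl(I_{\mathcal{H}} - F^{-1}(F-G)\bigr),
\end{equation*}
so once I set $T := F^{-1}(F-G)\in B(\mathcal{H})$, the invertibility of $G$ becomes equivalent to the invertibility of $I_{\mathcal{H}}-T$, and $G^{-1} = (I_{\mathcal{H}}-T)^{-1}F^{-1}$.

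Next I would estimate $\|T\|$ using the standing hypothesis. For every $h\in\mathcal{H}$,
\begin{equation*}
\|Th\| = \bigl\|F^{-1}(F-G)h\bigr\| \leq \|F^{-1}\|\cdot\|(F-G)h\| \leq \|F^{-1}\|\,\upsilon\,\|h\|,
\end{equation*}
so $\|T\|\leq \upsilon\|F^{-1}\|<1$ by the assumption $\upsilon<1/\|F^{-1}\|$. This brings the problem into the range of the classical Neumann series result: whenever a bounded operator $T$ satisfies $\|T\|<1$, the series $\sum_{k=0}^\infty T^k$ converges absolutely in operator norm to the bounded inverse of $I_{\mathcal{H}}-T$. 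I would invoke this (or prove it in one line by checking the geometric majorant $\sum_k\|T\|^k<\infty$ and verifying $(I_{\mathcal{H}}-T)\sum_{k=0}^N T^k = I_{\mathcal{H}}-T^{N+1}\to I_{\mathcal{H}}$).

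Combining these two steps gives the stated formula: since $G = F(I_{\mathcal{H}}-T)$ with both factors invertible in $B(\mathcal{H})$, the composition $G$ is invertible and
\begin{equation*}
G^{-1} = (I_{\mathcal{H}}-T)^{-1}F^{-1} = \sum_{k=0}^{\infty} T^{k}F^{-1} = \sum_{k=0}^{\infty}\bigl[F^{-1}(F-G)\bigr]^{k}F^{-1},
\end{equation*}
which is exactly the asserted series.

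I do not anticipate a real obstacle here; the only thing to be careful about is the order of the factors (writing $G = F(I-T)$ rather than $(I-T)F$, which would produce the series $F^{-1}\sum_k[(F-G)F^{-1}]^k$ instead of the one in the statement), and the fact that the norm bound on $T$ is strict, which is what allows the geometric series to converge. Everything else is a direct invocation of the Neumann series argument.
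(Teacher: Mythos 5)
Your proof is correct: the factorization $G = F\bigl(I_{\mathcal{H}} - F^{-1}(F-G)\bigr)$, the estimate $\|F^{-1}(F-G)\|\leq \upsilon\|F^{-1}\|<1$, and the Neumann series give exactly the stated inverse with the factors in the right order. The paper itself offers no proof here --- it imports the proposition from \cite{gohberg1} --- and your argument is precisely the standard Neumann-series reasoning behind that citation, so there is nothing to reconcile.
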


\section{$U$-fusion cross Gram matrix of operators}\label{sec:fusionGram0}
In this section, we extend the notion of cross Gram matrices \cite{mitraG} to fusion frames and discuss on their invertibility.

We interpret the representation of operators using fusion
frames \cite{xxlcharshaare18} as a generalization of the Gram matrix of operators:

\begin{defn}
Let $W=\{(W_{i},\omega_{i})\}_{i\in I}$ be a Bessel fusion sequence for $\mathcal{H}$ and $V=\{(V_{i},\upsilon_{i})\}_{i\in I}$ a fusion frame for $\mathcal{H}$. For $U\in B(\mathcal{H})$, the matrix operator
$\mathcal{G}_{U,W,V}:\left(\sum_{i\in I}\bigoplus W_{i}\right)_{\ell^{2}}\rightarrow \left(\sum_{i\in I}\bigoplus W_{i}\right)_{\ell^{2}}$ given by
\begin{eqnarray*}
\mathcal{G}_{U,W,V}=\phi_{WV}T_{V}^{*}UT_{W},
\end{eqnarray*}
 is  called the \emph{$U$-fusion cross Gram matrix}. If  $U=I_{\mathcal{H}}$, it is called \emph{fusion cross Gram matrix} and denoted by $\mathcal{G}_{W,V}$. We use $\mathcal{G}_{W}$ for $\mathcal{G}_{W,W}$; the so called \emph{fusion Gram matrix}.
\end{defn}

Note that
$$\left[ \mathcal{G}_{U,W,V} (f_i) \right]_j = \left[ \phi_{WV}\left\{v_k \pi_{V_k} U \sum \limits_i w_i f_i \right\}_{k\in I}\right]_{j\in I} = \sum \limits_i \underbrace{  w_iv_j \pi_{W_j}S_V^{-1}\pi_{V_j} U}_{B_{j,i}} f_i = \sum \limits_i B_{j,i} f_i, $$
where $B_{j,i} : W_i \rightarrow W_j$. Therefore $\mathcal{G}_{U,W,V}$ is a block-matrix of operators \cite{MANA:MANA19941670102}\footnote{This could be called a generalized subband matrix, motivated by system identification applications \cite{PerfAnSBI}.},  which motivates the name (cross-)Gram  matrix.

Clearly, using \eqref{phi},
$U$-fusion cross Gram matrices are well-defined and
\begin{eqnarray*}
\left\|\mathcal{G}_{U,W,V}\right\|
&=&\left\|\phi_{WV}T_{V}^{*}UT_{W}\right\|\\
&\leq&\left\|\phi_{WV}\right\|\left\|T_{V}^{*}\right\|
\|U\|\left\|T_{W}\right\|\\
&\leq& \left\|S_{V}^{-1}\right\|\|U\|\sqrt{B_{W}B_{V}}\\
&\leq&  \frac{\sqrt{B_W  B_V}}{A_V} \|U\|.
\end{eqnarray*}

 We have chosen to use $\mathcal{G}_{U,W,V}=\phi_{WV}T_{V}^{*}UT_{W} $ instead of the 'naive'
 $\mathfrak{G}_{U,W,V}= T_{V}^{*}UT_{W}$. The reason for that is that, by this definition, $\mathcal{G}_{U,W,V}$ maps $\sum \limits_{i \in I}\bigoplus {W}_i$ into itself  and is a projection as in the Hilbert space case (albeit an oblique one, see Remark \ref{sec:gengram1}).

We can represent an operator $U\in B(\mathcal{H})$ from its $U$-fusion cross Gram matrix. Suppose $W$ is a dual fusion frame of $V$, then
\begin{equation}\label{U from Gram}
T_W\mathcal{G}_{U,W,V}T_W^*S_W^{-1}=T_W\phi_{WV}T_{V}^{*}UT_{W}T_W^*S_W^{-1}=U.
\end{equation}
From the ideal property of $S_p(\mathcal{H})$ in $B(\mathcal{H})$ it follows that if $U$ is compact, trace class  and Hilbert Schmidt, so is $\mathcal{G}_{U,W,V}$. By using (\ref{U from Gram}) we can deduce the converse with some assumptions.
  Moreover,
  \begin{eqnarray*}
  \phi_{VW} \; \mathcal{G}_{U,W,V}^*=\mathcal{G}_{U^*,V,W} \; \phi_{WV}^*.
  \end{eqnarray*}

\begin{rem} \label{sec:gengram1}
Note that as in the discrete Hilbert space frame case $\mathcal{G}_{V,W} =
\mathcal{G}_{V,W}^2$ and so this an oblique
projection whenever $V$ is a dual fusion frame of $W$. Also $T_{V} \mathcal{G}_{V,W}
= T_{V}$, and $\mathcal{G}_{V,W} \left(
\phi_{V W}  T_{W}^{*} \right)  = \phi_{V
W} T_{W}^*$, but in general the operator is not self-adjoint.
 Using the above identities and the definition of
$\mathcal{G}_{V,W}$ we achieve
\begin{eqnarray*}
\kernel{\mathcal{G}_{V,W}}=\kernel{T_V}
\end{eqnarray*} and
\begin{eqnarray*}
\range{\mathcal{G}_{V,W}}=\left\{
\{\pi_{V_i}S_{W}^{-1}f_{i}\}_{i\in I}:\quad \{f_{i}\}_{i\in I}\in
\range{T^{*}_W}\right\}.
\end{eqnarray*}
\end{rem}
In the next result, we are going to characterize Gram
matrices of fusion orthonormal bases.
\begin{prop}\label{ort}
Let $W=\left\{(W_{i},\omega_i)\right\}_{i\in I}$ be a fusion Riesz basis. The following are equivalent.
\begin{enumerate}
\item[(1)] $W$ is a fusion orthonormal basis.
\item[(2)] $\mathcal{G}_{W}=I_{\sum_{i\in I}\bigoplus W_i}$.
\item[(3)] $\mathcal{G}_{W'}=I_{\sum_{i\in I}\bigoplus {W'}_i}$.
\end{enumerate}
\end{prop}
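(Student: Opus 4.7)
The plan is to analyze $\mathcal{G}_W = I$ block-by-block. From the formula immediately following the definition of $\mathcal{G}_{U,W,V}$, specialized to $V = W$ and $U = I_{\mathcal{H}}$, the operator $\mathcal{G}_W$ acts by block entries $B_{j,i} \colon W_i \to W_j$ given by $B_{j,i} f_i = \omega_i \omega_j\, \pi_{W_j} S_W^{-1} \pi_{W_j} f_i$. Thus $\mathcal{G}_W = I$ is equivalent to $B_{i,i} = \identity{W_i}$ for every $i \in I$ and $B_{j,i} = 0$ for all $i \neq j$.

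The decisive simplification will come from Proposition \ref{riesz mitra}(3) applied with $i = j$, which yields $\pi_{W_j} S_W^{-1} \pi_{W_j} = \frac{1}{\omega_j^2}\, \pi_{W_j}$ as operators on $\mathcal{H}$. Substituting this into the formula for $B_{j,i}$ gives $B_{j,i} f_i = \frac{\omega_i}{\omega_j}\, \pi_{W_j} f_i$ for every $f_i \in W_i$. In particular the diagonal blocks become $B_{i,i} f_i = \pi_{W_i} f_i = f_i$ automatically, so the condition $\mathcal{G}_W = I$ collapses to the single requirement $\pi_{W_j}(W_i) = \{0\}$ for all $i \neq j$, i.e., pairwise orthogonality $W_i \perp W_j$. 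Combined with $\clspan{W_i} = \mathcal{H}$ (which is built into the fusion Riesz basis hypothesis), this is exactly $\mathcal{H} = \bigoplus_i W_i$, the defining condition of a fusion orthonormal basis, establishing $(1) \Leftrightarrow (2)$.

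For $(1) \Leftrightarrow (3)$, I would invoke Lemma \ref{cor:Riestweight0} to conclude that $W' = \{(W_i, 1)\}_{i \in I}$ is also a fusion Riesz basis, and then rerun the argument above with $W$ replaced by $W'$. The result is $\mathcal{G}_{W'} = I$ iff the $W_i$ are pairwise orthogonal iff $\mathcal{H} = \bigoplus_i W_i$ iff $W$ is a fusion orthonormal basis, the last equivalence using that the fusion orthonormal basis property depends on the subspaces alone and not on the weights. The only real technical point is keeping careful track of which projection appears where in the block formula; once Proposition \ref{riesz mitra}(3) is used to collapse the sandwich $\pi_{W_j} S_W^{-1} \pi_{W_j}$, the rest of the argument is essentially a one-line computation.
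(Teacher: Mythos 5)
Your proof is correct, and it follows a genuinely different route from the paper's for the ``only if'' directions. You reduce everything to the block structure: since $W$ is a fusion Riesz basis, Proposition \ref{riesz mitra}(3) with $i=j$ collapses $\pi_{W_j}S_W^{-1}\pi_{W_j}$ to $\frac{1}{\omega_j^2}\pi_{W_j}$, so the blocks of $\mathcal{G}_W$ become $B_{j,i}=\frac{\omega_i}{\omega_j}\pi_{W_j}\big|_{W_i}$, the diagonal blocks are automatically $\identity{W_i}$, and $\mathcal{G}_W=I$ is exactly pairwise orthogonality of the $W_i$; together with completeness (built into the Riesz hypothesis) this is the definition $\Hil=\bigoplus_i W_i$, and Lemma \ref{cor:Riestweight0} lets you repeat the argument verbatim for $W'$. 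The paper's $(1)\Rightarrow(2)$ is essentially your computation in unblocked form (via \eqref{**}), but its converse directions are different: for $(2)\Rightarrow(1)$ it conjugates by the invertible synthesis operator to get $S_{W'}=T_W\mathcal{G}_WT_W^{-1}=I_{\Hil}$, and for $(3)\Rightarrow(1)$ it shows $S_{W'}^2=S_{W'}$ and uses invertibility, in both cases concluding via the $1$-uniform Parseval characterization of fusion orthonormal bases (Proposition 3.23 of the Casazza--Kutyniok paper). What your approach buys is a uniform, elementary treatment of (2) and (3) that makes transparent that $\mathcal{G}_W=I$ encodes precisely orthogonality of the subspaces, with no need to invert $T_W$ or to invoke the external Parseval characterization; what the paper's approach buys is brevity at the level of operators on $\Hil$ (no block-by-block bookkeeping, and the $(3)\Rightarrow(1)$ step needs only idempotency and invertibility of $S_{W'}$). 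The only small point you should make explicit is why $\mathcal{G}_W=I$ forces each block to be $\delta_{ij}\identity{W_i}$: test on sequences supported at a single index, which suffices by boundedness and density of finitely supported sequences in $\left(\sum_{i\in I}\bigoplus W_i\right)_{\ell^2}$.
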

\begin{proof}
$(1)\Rightarrow (2)$
Assume that $W$ is a fusion orthonormal basis. Applying (\ref{**}) for all ${\bf f}=\{f_i\}_{i\in I}\in \ltiw$ we have
\begin{eqnarray*}
\mathcal{G}_{W}\mathbf{f}&=&\phi_{WW}T_{W}^{*}T_{W}\mathbf{f}\\
&=&\phi_{WW}\left\{w_i\pi_{W_{i}}\sum_{j\in I}w_jf_{j}\right\}_{i\in I}\\
&=&\left\{\frac{1}{w_i^2}w_i\pi_{W_{i}}\sum_{j\in I}w_jf_{j}\right\}_{i\in I}\\
&=&\left\{\pi_{W_{i}}f_{i}\right\}_{i\in I}=\mathbf{f}.
\end{eqnarray*}

$(2)\Rightarrow(1)$ By Proposition \ref{riesz mitra} we obtain
\begin{eqnarray*}
S_{W'}&=& S_{W'}T_WT_W^{-1}\\
&=&\sum_{i\in I}\pi_{W_i}T_WT_W^{-1}\\
&=&\sum_{i\in I} w_i^2\pi_{W_i}S_W^{-1}\pi_{W_i}T_WT_W^{-1}\\
&=&T_W\phi_{WW}T_W^*T_WT_W^{-1}\\
&=&T_W\mathcal{G}_{W}T_W^{-1}=I_{\Hil}.
\end{eqnarray*}

$(1)\Rightarrow (3)$
It follows from $(1)\Rightarrow (2)$ and the fact that $W$ is  a fusion orthonormal basis if and only if $W'$ is  a fusion orthonormal basis.

$(3)\Rightarrow (1)$ It is enough to show that $W'$ is  a Parseval fusion frame by Proposition 3.23 of \cite{caskut04}.
\begin{eqnarray*}
S_{W'}^2&=&T_{W'}T_{W'}^*T_{W'}T_{W'}^*\\
&=&T_{W'}\mathcal{G}_{W'}T_{W'}^*=S_{W'}.
\end{eqnarray*}
Now, the invertibility of $S_{W'}$  implies $S_{W'}=I_{\Hil}.$

\end{proof}
\subsection {Invertibility of $U$-cross Gram matrices}
We now discuss the relationship between the invertibility of Gram matrices and their associated operators.
\begin{prop}\label{INV}
Let $W=\{(W_{i}, \omega_i)\}_{i\in I}$ be a  fusion frame in $\mathcal{H}$ and $U\in B(\mathcal{H})$. The following are equivalent:
\begin{enumerate}
\item[(1)] $W$ is a fusion  Riesz basis and $U$ is invertible.
\item[(2)] $\mathcal{G}_{U,W,W}$ is invertible.
\item[(3)] $\mathcal{G}_{U,W,W}$ is onto.
\item[(4)] $\mathcal{G}_{U,W,W}$ is one to one.
\item[(5)] $\mathcal{G}_{U,\widetilde{W},W}$ is invertible.
\item[(6)] $\mathcal{G}_{U,\widetilde{W},W}$ is onto.
\item[(7)] $\mathcal{G}_{U,\widetilde{W},W}$ is one to one.
\end{enumerate}
\end{prop}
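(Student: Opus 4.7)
The plan is to exploit the factorization $\mathcal{G}_{U,W,V} = \phi_{WV} T_V^* U T_W$. A key preliminary is that $\phi_{WW}$ and $\phi_{\widetilde W,W}$ are bounded bijections for any fusion frame $W$: both act component-wise (the $i$-th output depends only on $f_i$), and the $i$-th block of $\phi_{WW}$ on $W_i$ satisfies $\langle \pi_{W_i}S_W^{-1} f, f\rangle = \langle S_W^{-1} f, f\rangle \geq B_W^{-1}\|f\|^2$, so each block is positive and uniformly bounded below. Analogously $\phi_{\widetilde W,W}$ is bijective since $S_W^{-1}\colon W_i \to \widetilde W_i$ is a bijection and $\pi_{\widetilde W_i}$ is the identity on $\widetilde W_i$. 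Consequently, each invertibility (resp.\ surjectivity, injectivity) property of $\mathcal{G}_{U,W,W}$ is equivalent to the same property of $T_W^* U T_W$, and similarly for $\mathcal{G}_{U,\widetilde W,W}$ and $T_W^* U T_{\widetilde W}$.

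For (1) $\Rightarrow$ (2), (5): assuming $W$ is a fusion Riesz basis, Proposition \ref{4new} gives bijectivity of $T_W$ and $T_W^*$, and since the canonical dual of a fusion Riesz basis is again a fusion Riesz basis, also of $T_{\widetilde W}$, $T_{\widetilde W}^*$. Together with the invertibility of $U$, the composites $\phi_{WW} T_W^* U T_W$ and $\phi_{\widetilde W,W} T_W^* U T_{\widetilde W}$ are bijective. The implications (2) $\Rightarrow$ (3), (4) and (5) $\Rightarrow$ (6), (7) are immediate.

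For (3) or (4) $\Rightarrow$ (1): the range containment $\range{T_W^* U T_W}\subseteq \range{T_W^*}$ forces $T_W^*$ to be surjective under (3), while $\kernel{T_W}\subseteq\kernel{T_W^* U T_W}$ forces $T_W$ injective under (4). By Theorem \ref{4} either conclusion makes $W$ a fusion Riesz basis, after which $T_W$ and $T_W^*$ are bijections and the one-sided property of $\mathcal{G}$ passes to $U$. To promote this to full invertibility of $U$, I would invoke the self-adjointness $\phi_{WW}^* = \phi_{WW}$ (each diagonal block is self-adjoint on $W_i$) together with the identity $\mathcal{G}_{U,W,W}^* = T_W^* U^* T_W \phi_{WW}$: surjectivity of $\mathcal{G}_{U,W,W}$ forces $\mathcal{G}_{U,W,W}^*$ to be bounded below, hence $U^*$ bounded below, giving $U$ surjective with bounded left inverse and therefore bijective; an analogous adjoint argument handles (4). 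For (6), (7) $\Rightarrow$ (1) the same strategy applies to $T_W^* U T_{\widetilde W}$, supplemented by the reconstruction identity $U = T_{\widetilde W}\mathcal{G}_{U,\widetilde W,W}T_{\widetilde W}^* S_{\widetilde W}^{-1}$ from \eqref{U from Gram}, which transfers invertibility back to $U$ directly once $W$ (equivalently $\widetilde W$) has been identified as a fusion Riesz basis.

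The main obstacle will be precisely the promotion step from a one-sided invertibility property of $\mathcal{G}$ to full invertibility of $U$, since surjectivity or injectivity alone is strictly weaker than bijectivity on a Hilbert space (compare a unilateral shift). The resolution hinges on the self-adjointness of $\phi_{WW}$ (and of $\phi_{\widetilde W,W}$), which converts a one-sided property of $\mathcal{G}$ into the opposite one-sided property of $\mathcal{G}^*$, so that the two halves combine to yield the bijectivity of $U$.
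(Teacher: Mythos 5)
Your reductions and the implications $(1)\Rightarrow(2),(5)$ and $(2)\Rightarrow(3),(4)$, $(5)\Rightarrow(6),(7)$, as well as the deduction that any of the one-sided properties forces $W$ (equivalently $\widetilde W$) to be a fusion Riesz basis, are correct and run parallel to the paper's proof, which likewise factors $\mathcal{G}_{U,W,W}=\phi_{WW}T_W^{*}UT_W$, uses Theorem \ref{4}, and invokes \cite{Arab17} for the canonical dual. Your preliminary observation that $\phi_{WW}$ and $\phi_{\widetilde W W}$ are bounded bijections for an arbitrary fusion frame is fine, and in fact slightly cleaner than the paper, which only extracts invertibility of $\phi_{WW}$ after the Riesz property is established (or from surjectivity plus self-adjointness).

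The genuine gap is exactly the promotion step you flagged, and your proposed resolution does not work. Surjectivity of $\mathcal{G}_{U,W,W}$ does make $\mathcal{G}_{U,W,W}^{*}=T_W^{*}U^{*}T_W\phi_{WW}$ bounded below, hence $U^{*}$ bounded below; but ``$U^{*}$ bounded below'' is \emph{equivalent} to ``$U$ surjective'' (it furnishes a bounded \emph{right} inverse of $U$, not a left one), so the adjoint argument only reproduces the surjectivity you already had and cannot yield injectivity of $U$; symmetrically for $(4)$. Moreover, no argument can close this gap, because the implication is false: take $W_i=\mathrm{span}\{e_i\}$ with $\omega_i=1$, a fusion orthonormal basis, so that $S_W=I_{\mathcal H}$, $\phi_{WW}$ is the identity and $T_W$ is unitary under the obvious identification; then $\mathcal{G}_{U,W,W}$ (and $\mathcal{G}_{U,\widetilde W,W}$) is unitarily equivalent to $U$ itself, and choosing $U$ to be the backward shift gives $(3)$ and $(6)$ while $(1)$ fails, while the forward shift does the same for $(4)$ and $(7)$. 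What your computation actually establishes is that $(3)$ (resp.\ $(4)$) is equivalent to ``$W$ is a fusion Riesz basis and $U$ is onto (resp.\ one to one)'', and that $(1)$, $(2)$, $(5)$ are mutually equivalent. For what it is worth, the paper's own proof of $(3)\Rightarrow(1)$ makes the same unjustified leap, concluding ``$U$ is invertible'' from surjectivity of $T_W^{*}UT_W$, so the obstacle you identified is a defect of the statement itself rather than of your strategy.
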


\begin{proof}
 $(1)\Rightarrow (2)$ is trivial by the invertibility of $\phi_{WW}$, see \eqref{**}.

$(2)\Rightarrow (1)$  By using the invertibility
$\mathcal{G}_{U,W,W}=\phi_{WW}T_{W}^*UT_{W}$, the operator $T_{W}$
is injective and so $W$ is a fusion Riesz basis by Theorem
\ref{4}. So, $\phi_{WW}$ is invertible and then
\begin{eqnarray*}
U=\left(T_W^*\right)^{-1}\phi_{WW}^{-1}\mathcal{G}_{U,W,W}T_W^{-1}
\end{eqnarray*}
is invertible.

 $(2)\Rightarrow (3)$ and $(2)\Rightarrow (4)$ are trivial.

    $(3)\Rightarrow (1)$ If $\mathcal{G}_{U,W,W}=\phi_{WW}T_W^* UT_W$ is onto, then  $\phi_{WW}^*=\phi_{WW}$ is invertible. Hence, $T_W^*UT_W=\phi_{WW}^{-1}\mathcal{G}_{U,W,W}$ is onto. Thus, $W$ is a fusion Riesz basis and $U$ is invertible.

$(1)\Rightarrow (5)$
Using  Theorem 2.9 of \cite{Arab17}, $\widetilde{W}$ is also a fusion Riesz basis
and so by (\ref{**}), $T_{\widetilde{W}}$
is invertible by Theorem \ref{4}.
 Hence, $\mathcal{G}_{U,\widetilde{W},W}=\phi_{\widetilde{W}W}T_W^*UT_{\widetilde{W}}$ is invertible by the invertibility of  $U$ and $\phi_{\widetilde{W}W}\{f_i\}_{i\in I}=\left\{S_W^{-1}f_i\right\}_{i\in I}$, for all $\{f_i\}_{i\in I}\in \sum_{i\in I}\bigoplus W_i$.

 $(5)\Rightarrow (1)$
Note that $\phi_{\widetilde{W}W}$ is invertible by  the definition of $\phi_{\widetilde{W}W}$. Also, $T_{\widetilde{W}}$ is one to one since $\mathcal{G}_{U,\widetilde{W},W}=\phi_{\widetilde{W}W}T_{W}^*UT_{\widetilde{W}}$ is invertible and therefore $\widetilde{W}$ is a fusion Riesz basis by Theorem \ref{4}.  By Theorem 2.9  of \cite{Arab17},  $W$ is also a fusion Riesz basis.
  Applying Theorem \ref{4}, $T_W^*$ and $T_{\widetilde{W}}$ are invertible and it immediately follows the desired result.

 $(1)\Leftrightarrow (4) \Leftrightarrow (6)\Leftrightarrow (7)$ can be proved in an analogue way.

\end{proof}

Similar to the question, when the inverse of a frame multiplier is a multiplier again \cite{xxlmult1,stobalrep11}, we can show that for fusion Riesz bases the operator keeps its structure.  To state our results in a accessible way, we need a new definition:
\begin{defn}
Let  $W=\{(W_i,w_i)\}_{i\in I}$ be a fusion frame  and $V=\{(V_i,v_i)\}_{i\in I}$ a Bessel fusion sequence. The \emph{alternate cross-fusion frame operator}   $L_{VW}:\Hil\rightarrow \Hil$ is defined by
$$L_{VW} = T_V\phi_{VW}T_{W}^*.$$
We denote $L_{WW}$ as $L_W$.
\end{defn}
It follows that $\|L_{VW}\|\leq \frac{\sqrt{B_{W}B_V}}{A_{W}}$ and $$\left(L_{VW}\right)^* =\sum_{i\in I}\pi_{W_i}S_W^{-1}\pi_{V_i}.$$
Obviously, $L_{VW}$ is an invertible operator if and only if $V$ is a pseudo-dual of $W$.   In particular, $L_{VW}=I_{\Hil}$ if and only if $V$ is a dual of $W$.

In the following we summarize the basic properties of $L_W.$
\begin{prop}
Let $W=\{(W_i,w_i)\}_{i\in I}$ be a fusion frame. Then $L_W$ is positive, self-adjoint and invertible operator.
\end{prop}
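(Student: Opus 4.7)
The plan is to unpack $L_W = T_W\phi_{WW}T_W^*$ into an explicit sum and then verify the three properties directly from that representation.

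First, I would compute $L_W$ in closed form. Applying $T_W^*$ to $f\in\Hil$ yields $\{w_i\pi_{W_i}f\}_{i\in I}$; then $\phi_{WW}$, as given by \eqref{phi}, sends this sequence to $\{w_i\pi_{W_i}S_W^{-1}\pi_{W_i}f\}_{i\in I}$; finally $T_W$ collapses it to
\begin{eqnarray*}
L_Wf=\sum_{i\in I}w_i^2\,\pi_{W_i}S_W^{-1}\pi_{W_i}f.
\end{eqnarray*}

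Self-adjointness and positivity now follow at once. Each summand $w_i^2\pi_{W_i}S_W^{-1}\pi_{W_i}$ is of the form $ASA$ with $A=w_i\pi_{W_i}$ self-adjoint and $S=S_W^{-1}$ positive self-adjoint (recall that $S_W$ is positive and invertible), hence each summand is positive self-adjoint. Because the series defining $L_W$ converges strongly (the composition defining $L_W$ is a bounded operator, with norm at most $\sqrt{B_WB_W}/A_W$ from the estimate given just before the proposition), the strong limit is again positive and self-adjoint.

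For invertibility I would exploit coercivity. From $S_W\leq B_W I_{\Hil}$ one deduces $S_W^{-1}\geq B_W^{-1}I_{\Hil}$, and therefore
\begin{eqnarray*}
\langle L_Wf,f\rangle=\sum_{i\in I}w_i^2\langle S_W^{-1}\pi_{W_i}f,\pi_{W_i}f\rangle\geq\frac{1}{B_W}\sum_{i\in I}w_i^2\|\pi_{W_i}f\|^2\geq\frac{A_W}{B_W}\|f\|^2,
\end{eqnarray*}
where the final step uses the lower fusion frame bound. This coercivity forces $L_W$ to be injective with closed range, and since $L_W$ is self-adjoint with closed range we have $\range{L_W}=\kernel{L_W}^{\perp}=\Hil$, so $L_W$ is boundedly invertible with $\|L_W^{-1}\|\leq B_W/A_W$.

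There is no serious technical obstacle. The only mild trap is that $\phi_{WW}$ is \emph{not} itself self-adjoint in general (a short calculation gives $\phi_{WW}^*\{g_i\}=\{S_W^{-1}\pi_{W_i}g_i\}$), so the self-adjointness of $L_W$ must be read off from the explicit sum rather than claimed by a naive transposition of the sandwich $T_W\phi_{WW}T_W^*$.
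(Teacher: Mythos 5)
Your proof is correct and takes essentially the same route as the paper: unpack $L_W$ into the explicit sum $\sum_{i\in I}w_i^2\pi_{W_i}S_W^{-1}\pi_{W_i}$, prove the coercivity estimate $\left\langle L_Wf,f\right\rangle\geq\frac{A_W}{B_W}\|f\|^2$ (the paper phrases this via $S_W^{-1/2}$ and obtains the lower bound $A_W\left\|S_W^{1/2}\right\|^{-2}\|f\|^2$, which is the same thing since $\left\|S_W^{1/2}\right\|^2=\|S_W\|\leq B_W$), and deduce positivity and bounded invertibility from it.

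One correction to your closing remark, though: $\phi_{WW}$ \emph{is} self-adjoint as an operator on $\left(\sum_{i\in I}\bigoplus W_i\right)_{\ell^2}$, and that is exactly how the paper obtains self-adjointness of $L_W=T_W\phi_{WW}T_W^*$. Your formula for $\phi_{WW}^*$ forgets that sequences in this space have components $g_i\in W_i$ and that the adjoint must again map into this space: for $f_i,g_i\in W_i$ one has $\sum_{i\in I}\left\langle\pi_{W_i}S_W^{-1}f_i,g_i\right\rangle=\sum_{i\in I}\left\langle S_W^{-1}f_i,g_i\right\rangle=\sum_{i\in I}\left\langle f_i,\pi_{W_i}S_W^{-1}g_i\right\rangle$, using $\pi_{W_i}f_i=f_i$, $\pi_{W_i}g_i=g_i$ and the self-adjointness of $S_W^{-1}$; hence $\phi_{WW}^*=\phi_{WW}$. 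The expression $\{S_W^{-1}\pi_{W_i}g_i\}_{i\in I}$ you wrote would be the adjoint only if $\phi_{WW}$ were regarded as acting on all $\ell^2$ sequences in $\Hil$, and its components need not even lie in $W_i$. Since your argument derives self-adjointness directly from the summands $w_i^2\pi_{W_i}S_W^{-1}\pi_{W_i}$, this slip does not affect the validity of your proof, but the ``trap'' you warn about does not exist, and the paper's shorter derivation is legitimate.
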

\begin{proof}
It is easy to see that $\phi_{WW}$ is self-adjoint and so, $L_W$ is self-adjoint. Moreover, for all $f\in \Hil$ we have
\begin{eqnarray*}
\left\langle L_Wf,f\right\rangle&=&\left\langle \sum_{i\in I}{w_{i}^2}\pi_{W_i}S_W
^{-1}\pi_{W_{i}}f,f\right\rangle\\
&=&\sum_{i\in I}\left\langle w_{i} \cdot S_W
^{-1/2}\pi_{W_{i}}f,w_{i} \cdot S_W
^{-1/2}\pi_{W_i}f\right\rangle\\
&=&
 \left(\sum_{i\in I}w_i^2\left\|S_W
^{-1/2}\pi_{W_i} f\right\|^2\right)\\
&\geq& \left\|S_{W}^{1/2}\right\|^{-2}\left\|T_{W}^* f\right\|^2
\geq \frac{A_W}{\left\|S_{W}^{1/2}\right\|^{2}}\|f\|^2,
\end{eqnarray*}
where $A_W$ is  a lower bound of the fusion frame $W$.
This shows that $L_W$ is positive and an invertible operator in $B(\Hil).$
\end{proof}

As an easy consequence of Proposition \ref{riesz mitra}, we state
\begin{cor}\label{Wprime}  Let $W=\{(W_{i}, \omega_i)\}_{i\in I}$ be a fusion Riesz basis. Then
 $L_W=S_{W^{\prime}}$.
\end{cor}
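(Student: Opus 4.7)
The plan is to unfold the definition $L_W = T_W \phi_{WW} T_W^*$ and evaluate on an arbitrary $f \in \Hil$, invoking the explicit form of $\phi_{WW}$ that is available for fusion Riesz bases via equation~(\ref{**}), which comes directly from Proposition~\ref{riesz mitra}. This should reduce everything to the identity $S_{W'} f = \sum_{i \in I} \pi_{W_i} f$, because the weights $\omega_i$ will cancel exactly.

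Concretely, first I would compute $T_W^* f = \{\omega_i \pi_{W_i} f\}_{i \in I}$ from the definition of the analysis operator. Next, since $W$ is a fusion Riesz basis, equation~(\ref{**}) gives $\phi_{WW}(\{g_i\}_{i \in I}) = \{\omega_i^{-2} g_i\}_{i \in I}$, so
\begin{equation*}
\phi_{WW} T_W^* f = \bigl\{\omega_i^{-2} \cdot \omega_i \pi_{W_i} f\bigr\}_{i \in I} = \bigl\{\omega_i^{-1} \pi_{W_i} f\bigr\}_{i \in I}.
\end{equation*}
Finally, applying the synthesis operator $T_W$ and using the fact that $T_W(\{h_i\}) = \sum_i \omega_i h_i$ yields
\begin{equation*}
L_W f = T_W \phi_{WW} T_W^* f = \sum_{i \in I} \omega_i \cdot \omega_i^{-1} \pi_{W_i} f = \sum_{i \in I} \pi_{W_i} f = S_{W'} f,
\end{equation*}
which is exactly the fusion frame operator of the $1$-uniform family $W'$.

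There is no real obstacle here; the only subtle point is justifying that equation~(\ref{**}) applies on all of $\ltiw$ (in particular on the element $T_W^* f$), but this is immediate since (\ref{**}) is stated for arbitrary sequences $\{f_i\}_{i \in I}$ in the fusion Riesz basis setting. Convergence of the series defining $L_W f$ and $S_{W'} f$ is automatic from boundedness of $T_W, T_W^*, \phi_{WW}$, and from $W$ being a fusion frame, respectively. Since $f \in \Hil$ was arbitrary, we conclude $L_W = S_{W'}$.
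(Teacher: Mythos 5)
Your proof is correct and follows exactly the route the paper intends: the paper states the corollary as an immediate consequence of Proposition~\ref{riesz mitra} (via the identity~(\ref{**}) $\phi_{WW}\{f_i\}=\{\omega_i^{-2}f_i\}$, equivalently $\omega_i^2\pi_{W_i}S_W^{-1}\pi_{W_i}=\pi_{W_i}$), and your computation $L_W f = T_W\phi_{WW}T_W^* f=\sum_{i\in I}\pi_{W_i}f=S_{W'}f$ is precisely that argument written out.
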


\begin{thm}\label{sec:structinvGram1}
Let $W=\{(W_{i}, \omega_i)\}_{i\in I}$ and $V=\{(V_{i},\upsilon_{i})\}_{i\in I}$ be  fusion frames in $\mathcal{H}$ and $U$ an invertible operator in $B(\mathcal{H})$. Then the following assertions hold.
\begin{enumerate}
 \item[(1)] If $\mathcal{G}_{U,W,W}$ is invertible, then $W$ is a fusion Riesz basis and
$$\mathcal{G}_{U,W,W}^{-1} = \mathcal{G}_{S_{W^{\prime}}^{-1}U^{-1}S_{W^{\prime}}^{-1},W,W}.$$
 \item[(2)] If $\mathcal{G}_{U,V,W}$ is invertible and $V$ is a dual of $W$, then $V$ is a fusion Riesz basis and
$\mathcal{G}_{U,V,W}^{-1} = \mathcal{G}_{U^{-1},V,W}$.
\item[(3)]If $\mathcal{G}_{U,W,V}$ is invertible, then $W$ is a fusion Riesz basis and
$$\mathcal{G}_{U,W,V}^{-1} = \mathcal{G}_{( L_{WV} \,  U \, S_{W^{\prime}})^{-1},W,W},$$
\end{enumerate}

\end{thm}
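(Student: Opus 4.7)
The plan is to handle each part by reducing the claimed right-hand side to $\mathcal{G}_{U,W,V}^{-1}$ computed directly from the factorization, leaning on four ingredients: Corollary \ref{Wprime} ($L_W = S_{W'}$), the identity $L_W = T_W\phi_{WW}T_W^*$, the explicit inversion of $\phi_{WW}$ for a fusion Riesz basis from Proposition \ref{riesz mitra}, and the duality $T_V\phi_{VW}T_W^* = I_\Hil$ where it applies. For (1), Proposition \ref{INV} immediately gives that $W$ is a fusion Riesz basis and $U$ is invertible; by Theorem \ref{4} and Proposition \ref{riesz mitra}, both $T_W$ and $\phi_{WW}$ are invertible, so $\mathcal{G}_{U,W,W}^{-1} = T_W^{-1}U^{-1}(T_W^*)^{-1}\phi_{WW}^{-1}$. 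I would then substitute $S_{W'}^{-1} = (T_W^*)^{-1}\phi_{WW}^{-1}T_W^{-1}$, which follows from Corollary \ref{Wprime} combined with $L_W = T_W\phi_{WW}T_W^*$, into the proposed right-hand side $\phi_{WW}T_W^*S_{W'}^{-1}U^{-1}S_{W'}^{-1}T_W$; the adjacent pairs $T_W^*(T_W^*)^{-1}$ and $\phi_{WW}\phi_{WW}^{-1}$ telescope and the result matches.

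For (2), the hypothesis that $V$ is a dual of $W$ gives $T_V\phi_{VW}T_W^* = I_\Hil$, hence $T_V\mathcal{G}_{U,V,W} = UT_V$. Any $f$ with $T_Vf = 0$ is therefore annihilated by $\mathcal{G}_{U,V,W}$, and invertibility of $\mathcal{G}_{U,V,W}$ forces $T_V$ to be injective; surjectivity of $T_V$ is free from duality, so $T_V$ is bijective and $V$ is a fusion Riesz basis by Theorem \ref{4}, while $UT_V = T_V\mathcal{G}_{U,V,W}$ makes $U$ invertible. To see that $\mathcal{G}_{U^{-1},V,W}$ is the inverse I would compute $\mathcal{G}_{U,V,W}\mathcal{G}_{U^{-1},V,W} = \phi_{VW}T_W^*U(T_V\phi_{VW}T_W^*)U^{-1}T_V$; the middle factor collapses to $I_\Hil$ by duality, leaving $\phi_{VW}T_W^*T_V$, which equals the identity on the coefficient space because applying $T_V$ on the left recovers $T_V$ and $T_V$ is injective. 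The reverse composition is symmetric.

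For (3), the same kernel argument applied to $\mathcal{G}_{U,W,V} = \phi_{WV}T_V^*UT_W$, now exploiting invertibility of $U$, shows $T_W$ is injective, so $W$ is a fusion Riesz basis and $T_W$ is invertible. Using $L_{WV} = T_W\phi_{WV}T_V^*$ I would rewrite $\phi_{WV}T_V^* = T_W^{-1}L_{WV}$ so that $\mathcal{G}_{U,W,V} = T_W^{-1}L_{WV}UT_W$; invertibility of $\mathcal{G}_{U,W,V}$ together with invertibility of $U$ then forces $L_{WV}$ to be invertible, and combined with the automatic invertibility of $S_{W'}$ the product $L_{WV}US_{W'}$ is invertible, so the right-hand side is well-defined. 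Verification is a telescoping: in $\mathcal{G}_{U,W,V}\,\mathcal{G}_{(L_{WV}US_{W'})^{-1},W,W}$ I replace $T_W\phi_{WW}T_W^*$ by $L_W = S_{W'}$ via Corollary \ref{Wprime} and $\phi_{WV}T_V^*$ by $T_W^{-1}L_{WV}$, reducing to $T_W^{-1}(L_{WV}US_{W'})(L_{WV}US_{W'})^{-1}T_W = I$; the reverse order reduces similarly using $(L_{WV}US_{W'})^{-1}L_{WV}U = S_{W'}^{-1}$ and then $\phi_{WW}T_W^*S_{W'}^{-1}T_W = I$.

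The main technical obstacle I anticipate is bookkeeping which operators are invertible at each stage: in (3) one infers invertibility of $L_{WV}$ from that of $L_{WV}U$ only via the standing hypothesis that $U$ is invertible, and in (2) the identity $\phi_{VW}T_W^*T_V = I$ genuinely requires $T_V$ to have been shown injective first. Once this hierarchy is secured, each part collapses to repeated application of the single factorization $S_{W'} = T_W\phi_{WW}T_W^*$ and the duality $T_V\phi_{VW}T_W^* = I_\Hil$.
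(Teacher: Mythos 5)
Your proposal is correct and follows essentially the same route as the paper: Proposition \ref{INV} (resp.\ the kernel/duality argument via Theorem \ref{4}) yields the fusion Riesz basis property, and the inverse formulas are then verified by telescoping products using $S_{W'}=L_W=T_W\phi_{WW}T_W^*$ (Corollary \ref{Wprime}) and the duality relation $T_V\phi_{VW}T_W^*=I_{\mathcal{H}}$. In fact, for part (3), which the paper dismisses with a one-line reference to Corollary \ref{Wprime}, you supply the missing computation (including the invertibility of $L_{WV}US_{W'}$) in exactly the spirit the authors intend.
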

\begin{proof}
\begin{enumerate}
\item[(1)] By Proposition \ref{INV} $W$ is a fusion Riesz basis. Using Corollary \ref{Wprime} follows that
\begin{eqnarray*}
\mathcal{G}_{U,W,W}\mathcal{G}_{S_{W^{\prime}}^{-1}U^{-1}S_{W^{\prime}}^{-1},W,W}&=&
\phi_{WW}T_{W}^*UT_{W}\phi_{WW}T_{W}^*S_{W^{\prime}}^{-1}U^{-1}S_{W^{\prime}}^{-1}T_{W}\\
&=&\phi_{WW}T_{W}^*S_{W^{\prime}}^{-1}T_{W}=I_{\left(\sum_{i\in I}\bigoplus \widetilde{W}_{i}\right)_{\ell^2}}.
\end{eqnarray*}
With the same we have $\mathcal{G}_{S_{W^{\prime}}^{-1}U^{-1}
S_{W^{\prime}}^{-1},W,W}\mathcal{G}_{U,W,W}=I_{\left(\sum_{i\in I}\bigoplus \widetilde{W}_{i}\right)_{\ell^2}}.$

\item[(2)]  According to (\ref{sec:duality}) we have $T_V\phi_{VW}T_{W}^*=I_{\Hil}$. In addition, the invertibility of $\mathcal{G}_{U,V,W}$ implies that $\phi_{VW}T_{W}^*$ has a right inverse. Using Proposition \ref{4} $V$ is a fusion Riesz basis,  and therefore,
\begin{eqnarray*}
\phi_{VW}T_{W}^*T_V=I_{\left(\sum_{i\in I}\bigoplus V_i\right)_{\ell^2}}.
\end{eqnarray*}
Hence,
\begin{eqnarray*}
\mathcal{G}_{U,V,W}\mathcal{G}_{U^{-1},V,W}&=& \phi_{VW}T_{W}^*UT_{V}\phi_{VW}T_{W}^*U^{-1}T_{V}\\
&=& \phi_{VW}T_{W}^*T_{V}=
I_{\left(\sum_{i\in I}\bigoplus V_{i}\right)_{\ell^2}}.
\end{eqnarray*}
Similarly, $\mathcal{G}_{U^{-1},V,W}\mathcal{G}_{U,V,W}^{-1}= I_{\left(\sum_{i\in I}\bigoplus \widetilde{W}_{i}\right)_{\ell^2}}.$ \\
\item[(3)] It follows from Corollary \ref{Wprime}.
\end{enumerate}
\end{proof}

 Repeating the previous argument and using (\ref{sec:duality}) leads to a characterization for fusion Riesz bases due to
 $U$-fusion cross Gram matrices.

\begin{thm}
Let $V=\{(V_{i},\upsilon_{i})\}_{i\in I}$ be a dual fusion frame of $W=\{(W_{i}, \omega_i)\}_{i\in I}$ in $\mathcal{H}$.  The following are
equivalent:
\begin{enumerate}
\item[(1)] $V$ is a fusion Riesz basis.
\item[(2)] $\mathcal{G}_{V,W}=I_{\left(\sum_{i\in I}\bigoplus V_{i}\right)_{\ell^{2}}}$.
    \item[(3)] $\mathcal{G}_{V,W}$ has a left inverse.
    \end{enumerate}
\end{thm}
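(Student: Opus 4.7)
The proof strategy is to exploit the fact that, whenever $V$ is a dual of $W$, the operator $\mathcal{G}_{V,W}$ is automatically an oblique projection, as recorded in Remark \ref{sec:gengram1}. The equivalence then reduces to asking when this projection is injective.

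First I would handle the implication $(1) \Rightarrow (2)$. Because $V$ is a dual of $W$, equation \eqref{sec:duality} gives $T_V \phi_{VW} T_W^* = I_{\mathcal{H}}$, and a direct computation yields
\begin{equation*}
\mathcal{G}_{V,W}^2 = \phi_{VW} T_W^* \bigl(T_V \phi_{VW} T_W^*\bigr) T_V = \phi_{VW} T_W^* T_V = \mathcal{G}_{V,W},
\end{equation*}
so $\mathcal{G}_{V,W}$ is idempotent. Since $V$ is a fusion Riesz basis, Theorem \ref{4} forces $T_V$ to be injective. Moreover, the same duality identity $T_V \phi_{VW}T_W^* = I_\Hil$ and the chain $T_V(\phi_{VW}T_W^*T_V f)=T_V f$ show that $\kernel{\mathcal{G}_{V,W}} = \kernel{T_V}$, as already noted in Remark \ref{sec:gengram1}. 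Hence $\mathcal{G}_{V,W}$ is an injective idempotent; any such operator equals the identity (from $\mathcal{G}_{V,W}(\mathcal{G}_{V,W}x - x) = 0$ and injectivity).

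The implication $(2) \Rightarrow (3)$ is immediate, since the identity is its own left inverse.

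For $(3) \Rightarrow (1)$, a left inverse makes $\mathcal{G}_{V,W}$ injective, and the identification $\kernel{\mathcal{G}_{V,W}} = \kernel{T_V}$ used above then forces $T_V$ to be injective. Note that $V$ being a G\v{a}vru\c{t}a-dual of $W$ already ensures $T_V$ is surjective (as it appears as a left factor of $I_{\Hil}$ in \eqref{sec:duality}), so $V$ is in fact a fusion frame; Theorem \ref{4} then promotes the injectivity of $T_V$ to the fusion Riesz basis property.

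I do not expect a real obstacle: the computational content is completely packaged by Remark \ref{sec:gengram1} and Theorem \ref{4}. The only point that deserves explicit verification is the kernel identity $\kernel{\mathcal{G}_{V,W}} = \kernel{T_V}$, which uses the duality $T_V \phi_{VW} T_W^* = I_\Hil$ twice (once for the inclusion $\kernel{\mathcal{G}_{V,W}} \subseteq \kernel{T_V}$ by composing with $T_V$, and implicitly to justify that $V$ is a fusion frame).
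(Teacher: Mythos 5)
Your proof is correct and follows essentially the same route the paper intends: the duality identity \eqref{sec:duality}, the kernel and projection facts recorded in Remark \ref{sec:gengram1}, and Theorem \ref{4} to pass between injectivity of $T_V$ and the fusion Riesz basis property. The only cosmetic difference is that you get $(1)\Rightarrow(2)$ via ``an injective idempotent is the identity,'' whereas the paper's implicit argument reads $\phi_{VW}T_W^{*}=T_V^{-1}$ from the bijectivity of $T_V$ and concludes $\mathcal{G}_{V,W}=T_V^{-1}T_V=I$; these are equivalent one-line arguments.
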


Consider $W=\{(W_{i},\omega_{i})\}_{i\in I}$, $V=\{(V_{i},\upsilon_{i})\}_{i\in I}$ and $Z=\{(Z_{i},z_{i})\}_{i\in I}$ as fusion frames  in $\mathcal{H}$ and $U_{1}, U_{2}\in B(\mathcal{H})$, it is obvious to see that
\begin{equation*}
\mathcal{G}_{U_{1},W,V}\mathcal{G}_{U_{2},W,Z}=
    \mathcal{G}_{U_{1}T_{W}\phi_{WZ}T_{Z}^{*}U_{2},W,V}.
\end{equation*}
In particular, if $V=\{(V_{i},\upsilon_{i})\}_{i\in I}$ is a dual of $Z=\{(Z_{i},z_{i})\}_{i\in I}$, then
\begin{equation*} \mathcal{G}_{U_{1},V,W}\mathcal{G}_{U_{2},V,Z}=\mathcal{G}_{U_{1}U_{2},V,W}.
\end{equation*}
As a special case, if $U$ is invertible and $V$ a fusion Riesz
basis such that $V$ is a dual of $W$, then $\mathcal{G}_{U,V,W}$ has an inverse in
the form of Gram matrices
\begin{equation*}
\left(\mathcal{G}_{U,V,W}\right)^{-1}=
\mathcal{G}_{U^{-1},V,W}.
\end{equation*}
The above identity is also proved in Theorem \ref{sec:structinvGram1}.
\subsection {Pseudo-inverses}

In the following we discuss the pseudo-inverse of $U$-fusion cross Gram matrices with closed range, and under some conditions we represent their pseudo-inverse as a $U$-fusion cross Gram matrix again motivated by the discrete frame case \cite{mitraG}.
In the following we state a sufficient condition for a $U$-fusion cross Gram matrix having  closed range.

\begin{lem} Let $V=\{(V_{i}, \upsilon_i)\}_{i\in I}$  be a dual  fusion frame of  $W=\{(W_{i}, \omega_i)\}_{i\in I}$    in $\mathcal{H}$ and $U$ an operator in $B(\mathcal{H})$ such that $UV=\{(UV_i, \upsilon_i)\}_{i\in I}$ is also a fusion frame in $\mathcal{H}$. Then $\mathcal{G}_{U,V,W}$ has closed range and
\begin{eqnarray*}
\range{\mathcal{G}_{U,V,W}}&=& \range{\phi_{VW}T_{W}^{*}}.
\end{eqnarray*}
\end{lem}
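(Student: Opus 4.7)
My plan is to analyze the factorization
\[
\mathcal{G}_{U,V,W}=\phi_{VW}T_{W}^{*}\circ(UT_{V})
\]
and handle the two factors separately. Since $V$ is a dual of $W$, equation \eqref{sec:duality} gives $T_{V}\phi_{VW}T_{W}^{*}=I_{\Hil}$. Hence $\phi_{VW}T_{W}^{*}:\Hil\to(\sum_{i\in I}\bigoplus V_{i})_{\ell^{2}}$ admits $T_{V}$ as a left inverse; in particular it is bounded below (with constant $\|T_{V}\|^{-1}\geq A_{V}^{1/2}/\sqrt{B_{V}}$), injective, and its image $\range{\phi_{VW}T_{W}^{*}}$ is closed. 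This already delivers the easy inclusion $\range{\mathcal{G}_{U,V,W}}\subseteq \range{\phi_{VW}T_{W}^{*}}$ and the fact that the target set is closed.

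For the reverse inclusion I would first observe that $V$ being a fusion frame makes $T_{V}$ surjective onto $\Hil$ (since $S_{V}=T_{V}T_{V}^{*}$ is invertible), so $\range{UT_{V}}=\range{U}$. The plan is then to upgrade this to $\range{UT_V}=\Hil$ via the hypothesis that $UV$ is a fusion frame. Concretely, I would introduce the bounded map
\[
M_{U}:(\sum_{i\in I}\bigoplus V_{i})_{\ell^{2}}\longrightarrow (\sum_{i\in I}\bigoplus UV_{i})_{\ell^{2}},\qquad M_{U}(\{f_{i}\})=\{Uf_{i}\},
\]
which satisfies $UT_{V}=T_{UV}M_{U}$ and hence $\mathcal{G}_{U,V,W}=\phi_{VW}T_{W}^{*}T_{UV}M_{U}$. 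Because $UV$ is a fusion frame, its synthesis operator $T_{UV}$ is surjective onto $\Hil$, and using, e.g., the canonical dual of $UV$, any $h\in\Hil$ admits a representation $h=T_{UV}(\{g_{i}\})$ with $g_{i}\in UV_{i}$. I would then select preimages $f_{i}\in V_{i}$ with $Uf_{i}=g_{i}$ so that $\{f_{i}\}\in (\sum_{i\in I}\bigoplus V_{i})_{\ell^{2}}$, giving $h=UT_{V}(\{f_{i}\})$. Applying $\phi_{VW}T_{W}^{*}$ to both sides then shows every $y=\phi_{VW}T_{W}^{*}h\in \range{\phi_{VW}T_{W}^{*}}$ lies in $\range{\mathcal{G}_{U,V,W}}$.

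The main obstacle is exactly this last selection step: the fusion frame property on $UV$ immediately yields $\overline{\range{U}}=\Hil$ (because $\clspan{UV_i}=\Hil$), but promoting this to the equality $\range{UT_{V}}=\Hil$ requires choosing the preimages $f_{i}\in V_{i}$ with uniform $\ell^{2}$-control of $\{\|f_{i}\|\}$ from the $\ell^{2}$-control of $\{\|g_{i}\|\}$. I expect this to work by exploiting the bounded-below property of $\phi_{VW}T_{W}^{*}$ together with the fusion frame bounds of $UV$, which together transfer norm estimates from the $UV_{i}$'s back to the $V_{i}$'s, ensuring the constructed sequence lies in $(\sum_{i\in I}\bigoplus V_{i})_{\ell^{2}}$ and completing the equality of ranges.
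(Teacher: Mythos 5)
Your factorization and overall route coincide with the paper's own proof: closed range of $\phi_{VW}T_{W}^{*}$ from the duality relation $T_{V}\phi_{VW}T_{W}^{*}=I_{\Hil}$, then passing from $UT_{V}$ to $T_{UV}$ and using surjectivity of $T_{UV}$. However, the step you yourself flag as the main obstacle is a genuine gap, and it cannot be closed by the tools you propose. The bounded-below property of $\phi_{VW}T_{W}^{*}$ and the frame bounds of $UV$ concern the analysis/synthesis side only; what you actually need is a uniform estimate $\|f_{i}\|\leq C\|Uf_{i}\|$ for $f_{i}\in V_{i}$ in order to select $\ell^{2}$-summable preimages, and no such estimate follows from the hypotheses. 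In fact the desired conclusion can fail under the stated assumptions: take $W_{i}=V_{i}=\mathrm{span}(e_{i})$ with all weights $1$ (so $V$ is a dual of $W$, $S_{W}=I_{\Hil}$, and $\phi_{VW}T_{W}^{*}$ is, under the natural identification, the identity of $\ell^{2}$), and let $U$ be the diagonal operator $Ue_{i}=\frac{1}{i}e_{i}$. Then $UV_{i}=V_{i}$, so $UV$ is a fusion frame, yet $\mathcal{G}_{U,V,W}$ acts as $\{c_{i}e_{i}\}_{i\in I}\mapsto\{\frac{1}{i}c_{i}e_{i}\}_{i\in I}$, whose range is dense but not closed and strictly smaller than $\range{\phi_{VW}T_{W}^{*}}$. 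So the reverse inclusion in your plan is not merely unproven, it is false in this generality; one needs an additional hypothesis such as $U$ surjective (for instance invertible, the case the paper itself points to after the lemma), or $U$ bounded below on the subspaces $V_{i}$, to get $\range{UT_{V}}=\Hil$.

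For context: the paper's proof is a three-line computation that silently asserts $\range{\phi_{VW}T_{W}^{*}UT_{V}}=\range{\phi_{VW}T_{W}^{*}T_{UV}}$, i.e. it makes exactly the jump you were honest about. So you have correctly located the crux of the argument, but as written your proposal does not prove the statement, and the completion you sketch (transferring norm control from the $UV_{i}$'s back to the $V_{i}$'s via the frame bounds) cannot succeed without strengthening the hypotheses on $U$.
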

\begin{proof}
The dual condition (\ref{sec:duality}) follows that  $\phi_{VW}T_{W}^{*}$ has closed range. Thus,
\begin{eqnarray*}
\range{\mathcal{G}_{U,V,W}}&=& \range{\phi_{VW}T_{W}^{*}UT_{V}}\\
&=& \range{\phi_{VW}T_{W}^{*}T_{UV}}\\
&=& \range{\phi_{VW}T_{W}^{*}}.
\end{eqnarray*}
\end{proof}

The assumptions in  the above lemma are  fulfilled, in particular, for  $U$ being invertible \cite{ga07}.

\begin{thm}\label{pse1}
Let $V=\{(V_{i},\upsilon_{i})\}_{i\in I}$ be a dual fusion frame of $W=\{(W_{i}, \omega_i)\}_{i\in I}$ in $\mathcal{H}$, also let $U\in B(\mathcal{H})$ and $\mathcal{G}_{U,V,W}$ have closed range.  The following are equivalent:
 \begin{enumerate}
 \item[(1)]  $\mathcal{G}_{U,V,W}^{\dagger}=\mathcal{G}_{U^{\dagger},V,W}.$
\item[(2)] $\range{\phi_{VW}T_{W}^{*}U^{*}}=\range{T_V^*U^*}$ and $\range{\phi_{VW}T_{W}^{*}U}=\range{T_V^*U}$.
\item[(3)] $\phi_{VW}T_W^*U^*=T_V^*S_V^{-1}U^*$ and $\phi_{VW}T_W^*U=T_V^*S_V^{-1}U$.
 \end{enumerate}
\end{thm}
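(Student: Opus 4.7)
The plan is to introduce the defect operator $A := \phi_{VW} T_W^* - T_V^* S_V^{-1}$ and exploit the orthogonal decomposition $\ltiv = \kernel{T_V} \oplus \overline{\range{T_V^*}}$ together with the four Moore--Penrose equations for the pseudo-inverse. Throughout write $G := \mathcal{G}_{U,V,W}$ and $H := \mathcal{G}_{U^{\dagger},V,W}$. Since $V$ is dual to $W$ and $T_V T_V^* = S_V$, we have $T_V \phi_{VW} T_W^* = \identity{\Hil} = T_V T_V^* S_V^{-1}$, so $T_V A = 0$ and $\range{A} \subseteq \kernel{T_V}$; moreover, as $V$ is a fusion frame, $T_V$ is surjective and $T_V^*$ is injective with closed range $\overline{\range{T_V^*}} = \kernel{T_V}^{\perp}$.

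For $(2) \Rightarrow (3)$ I argue pointwise: for $f \in \range{U^{*}}$, by (2) the vector $\phi_{VW} T_W^* f$ lies in $\range{T_V^* U^{*}} \subseteq \kernel{T_V}^{\perp}$, while $T_V^* S_V^{-1} f \in \kernel{T_V}^{\perp}$ automatically, so $A f \in \kernel{T_V}^{\perp} \cap \kernel{T_V} = \{0\}$. Hence $AU^{*} = 0$, and symmetrically $AU = 0$, which is (3). For the converse $(3) \Rightarrow (2)$, the identities in (3) give $\range{\phi_{VW} T_W^* U^{(*)}} = \range{T_V^* S_V^{-1} U^{(*)}}$, and the injectivity of $T_V^*$ together with the adjoint form of (3) identifies this with $\range{T_V^* U^{(*)}}$.

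For $(3) \Rightarrow (1)$, (3) gives $\range{U} + \range{U^{*}} \subseteq \kernel{A}$, and since $\kernel{A}$ is closed and $\range{U^{\dagger}} = \kernel{U}^{\perp} = \overline{\range{U^{*}}}$, we also obtain $AU^{\dagger} = 0$. Therefore
\begin{equation*}
G = T_V^* S_V^{-1} U T_V, \qquad H = T_V^* S_V^{-1} U^{\dagger} T_V,
\end{equation*}
and the two equations $GHG = G$, $HGH = H$ follow at once from $T_V T_V^* S_V^{-1} = \identity{\Hil}$, $U U^{\dagger} U = U$, and $U^{\dagger} U U^{\dagger} = U^{\dagger}$. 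For the self-adjointness of $GH$ and $HG$ I take adjoints in (3) to obtain $U^{*} T_W \phi_{VW}^{*} = U^{*} S_V^{-1} T_V$ and $U T_W \phi_{VW}^{*} = U S_V^{-1} T_V$, pre-multiply by $(U^{*})^{\dagger}$ and $U^{\dagger}$ respectively to get $U U^{\dagger}\, T_W \phi_{VW}^{*} = U U^{\dagger}\, S_V^{-1} T_V$ together with the analogue for $U^{\dagger} U$, and substitute into $(GH)^{*} = T_V^* U U^{\dagger} T_W \phi_{VW}^{*}$ to match it with $GH$; the same recipe handles $HG$. For the converse $(1) \Rightarrow (3)$, I use the pseudoinverse characterization $\range{H} = \kernel{G}^{\perp}$ and $\kernel{H} = \range{G}^{\perp}$, expand both sets via the explicit formulas for $G$, $H$, and $G^{*}$, and invoke the injectivity of $T_V^{*}$ to recover the pointwise operator identities of (3).

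The delicate step is the self-adjointness check in $(3) \Rightarrow (1)$: aligning $(GH)^{*}$ with $GH$ requires using \emph{both} equations of (3) simultaneously, combined with the identities $U U^{\dagger} = (U^{*})^{\dagger} U^{*}$ and $U^{\dagger} U = U^{*} (U^{*})^{\dagger}$, so that one never needs $S_V$ to commute with $U U^{\dagger}$ as an operator on $\Hil$.
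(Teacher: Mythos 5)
Your overall route differs from the paper's: you verify the four Moore--Penrose equations for $H:=\mathcal{G}_{U^{\dagger},V,W}$ and run the cycle $(2)\Rightarrow(3)\Rightarrow(1)\Rightarrow(3)$, whereas the paper proves $(2)\Rightarrow(1)$ directly using the kernel/range characterization of the pseudo-inverse from its preliminaries ($GHG=G$ together with $\range{H}=\kernel{G}^{\perp}$ and $\kernel{H}=\range{G}^{\perp}$), so that the range equalities it needs are literally the hypotheses of (2), and it handles $(2)\Rightarrow(3)$ by Douglas' factorization theorem. Your $(2)\Rightarrow(3)$ argument, via the defect operator $A$ and the decomposition of the domain into $\kernel{T_V}\oplus\kernel{T_V}^{\perp}$, is correct and is a nice elementary replacement for the Douglas step.

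The genuine gap sits exactly at the step you flag as delicate, and your proposed fix does not close it. Writing $G:=\mathcal{G}_{U,V,W}$, under (3) and duality one has
\begin{equation*}
GH=\phi_{VW}T_W^{*}UU^{\dagger}T_V=T_V^{*}S_V^{-1}UU^{\dagger}T_V,
\qquad
(GH)^{*}=T_V^{*}UU^{\dagger}T_W\phi_{VW}^{*}=T_V^{*}UU^{\dagger}S_V^{-1}T_V,
\end{equation*}
where the last equality is your substitution $UU^{\dagger}T_W\phi_{VW}^{*}=UU^{\dagger}S_V^{-1}T_V$. Since $T_V^{*}$ is injective and $T_V$ is surjective, $(GH)^{*}=GH$ is then \emph{equivalent} to $S_V^{-1}UU^{\dagger}=UU^{\dagger}S_V^{-1}$ --- precisely the commutation you claim to have avoided --- and (3) does not provide it: (3) only says that $\phi_{VW}T_W^{*}$ and $T_V^{*}S_V^{-1}$ agree on $\range{U}+\range{U^{*}}$, which carries no information about $S_V$ leaving $\range{U}$ or $\range{U^{*}}$ invariant. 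The same missing invariance undermines your $(3)\Rightarrow(2)$: after the injectivity reduction you still need $S_V^{-1}\range{U^{*}}=\range{U^{*}}$ (and likewise with $U$), which ``the adjoint form of (3)'' does not give; the paper, for its part, dismisses this direction as clear, but your write-up makes the stronger claim that the difficulty evaporates, which it does not. Your $(1)\Rightarrow(3)$ is likewise only an outline. This is why the paper never argues from (3) to (1): it derives (1) from (2), where the equalities $\range{\phi_{VW}T_{W}^{*}U^{*}}=\range{T_V^*U^*}$ and $\range{\phi_{VW}T_{W}^{*}U}=\range{T_V^*U}$ are hypotheses, and no self-adjointness of $GH$ or $HG$ ever has to be checked. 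To repair your proof, either prove (1) from (2) as the paper does (and get $(3)\Rightarrow(1)$ by first passing to (2)), or supply an argument that (3) forces the invariance of $\range{U}$ and $\range{U^{*}}$ under $S_V^{-1}$; as written, the implication $(3)\Rightarrow(1)$ is not established.
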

\begin{proof}
$(2)\Rightarrow (1)$
Applying (\ref{sec:duality}) follows that
\begin{eqnarray*}
\mathcal{G}_{U,V,W}\mathcal{G}_{U^{\dagger},V,W}\mathcal{G}_{U,V,W}&=&\phi_{VW}T_{W}^{*}UT_{V}\phi_{VW}T_{W}^{*}U^{\dagger}T_{V}\phi_{VW}T_{W}^{*}UT_{V}\\
&=&\phi_{VW}T_{W}^{*}UU^{\dagger}UT_{V}\\
&=&\mathcal{G}_{U,V,W}.
\end{eqnarray*}
In addition, (\ref{sec:duality}) is also follows that $T_{W}\phi_{VW}^*$ is surjective. Hence, we have
\begin{eqnarray*}
\range{\mathcal{G}_{U^{\dagger},V,W}}&=& \range{\phi_{VW}T_{W}^{*}U^{\dagger}T_{V}}\\
&=& \range{\phi_{VW}T_{W}^{*}U^{*}}\\
\textrm{(by (2))}&=&\range{T_{V}^*U^*}\\
&=& \range{T_{V}^{*}U^{*}T_{W}\phi_{VW}^*}\\
&=& \range{\mathcal{G}_{U,V,W}^{*}}.
\end{eqnarray*}
Moreover,
\begin{eqnarray*}
\kernel{\mathcal{G}_{U^{\dagger},V,W}}&=& \kernel{\phi_{VW}T_{W}^{*}U^{\dagger}T_{V}}\\
&=& \kernel{U^{\dagger}T_{V}}\\
&=& \range{T_{V}^{*}(U^{\dagger})^{*}}^{\perp}\\
&=& \range{T_{V}^{*}U}^{\perp}\\
\textrm{(by (2))}&=& \range{\phi_{VW}T_{W}^{*}U}^{\perp}\\
&=& \kernel{U^{*}T_{W}\phi_{VW}^{*}}\\
&=& \kernel{\mathcal{G}_{U,V,W}^{*}}.
\end{eqnarray*}
Thus, (1) is obtained. The converse follows immediately from the above identities.
To show
$(2)\Rightarrow (3)$
using Douglas's theorem \cite{doug96}, there is an operator $C\in B(\Hil)$ such that
\begin{eqnarray*}
\phi_{VW}T_W^*U= T_V^*UC.
\end{eqnarray*}
So,
\begin{eqnarray*}
\phi_{VW}T_W^*U&=& T_V^*UC\\
&=&T_V^*S_V^{-1}T_VT_V^*UC\\
&=& T_V^*S_V^{-1}T_V\phi_{VW}T_W^*U=T_V^*S_V^{-1}U.
\end{eqnarray*}
The other identity is obtained similarly. The converse is clear.
\end{proof}

 Note that fusion Riesz bases satisfy in the assumptions of Theorem \ref{pse1}. These assumptions might seem obvious, but are not \cite{ga07}.
In particular, the failure to fulfill this equality makes the concept of duality of fusion frames interesting, so fusion frames are {\em not} 'just
another' generalization of frames.
\begin{rem}
 In the proof of Theorem \ref{pse1}(1) we can replace the fusion frame appeared in the right side by any fusion frame $Z$ such that $V$ is its dual. More precisely, let $W$ and $Z$ be fusion frames and $V$ is a dual of both $W$ and $Z$. Then
$$\mathcal{G}_{U,V,W}^{\dagger}= \mathcal{G}_{U^{\dagger},V,Z},$$
if and only if $\phi_{VZ}T_{Z}^{*}U^{*}=T_V^*S_V^{-1}U^*$ and $\phi_{VZ}T_{Z}^{*}U=T_V^*S_V^{-1}U$.
\end{rem}

Using a similar argument as Theorem \ref{pse1} we obtain the following.
\begin{thm} Let $W=\{(W_{i}, \omega_i)\}_{i\in I}$ be a fusion frame in $\mathcal{H}$, also let $U\in B(\mathcal{H})$ and $\mathcal{G}_{U,V,W}$ have closed range.  The following are equivalent:
 \begin{enumerate}
 \item[(1)]  $(\mathcal{G}_{U,W,W})^{\dagger}= \mathcal{G}_{L_W^{-1}U^{\dagger}L_W^{-1},W,W}.$
\item[(2)] The operators  $\phi_{WW}T_{W}^{*}L_W^{-1}U^{*}$ and $T_W^*U^*$ have the same range as $\phi_{WW}T_{W}^{*}U$ and $T_W^*(L_{W}^{-1})^*U$, respectively, for all $i\in I$
\item[(3)]  $\phi_{WW}T_W^*L_{W}^{-1}U^*=T_W^*S_W^{-1}U^*$ and $\phi_{WW}T_W^*U=T_W^*S_W^{-1}U$.
 \end{enumerate}
\end{thm}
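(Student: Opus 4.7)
The plan is to mirror the proof of Theorem \ref{pse1} line by line, with $V$ replaced by $W$ and each appearance of the duality identity $T_V \phi_{VW} T_W^{*} = I_{\mathcal{H}}$ replaced by the cross-frame identity $T_W \phi_{WW} T_W^{*} = L_W$. The role of the two $L_W^{-1}$ factors in the candidate inverse $\mathcal{G}_{L_W^{-1}U^{\dagger}L_W^{-1},W,W}$ is exactly to cancel the two extra $L_W$ factors that appear when we substitute for $T_W \phi_{WW} T_W^{*}$, so that the reduction pattern behaves as if $W$ were a dual of itself.

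For $(2) \Rightarrow (1)$, I would first verify by direct computation the two "outer" Moore-Penrose identities. Expanding
\[
\mathcal{G}_{U,W,W}\, \mathcal{G}_{L_W^{-1}U^{\dagger}L_W^{-1},W,W}\, \mathcal{G}_{U,W,W}
= \phi_{WW}T_W^{*} U (T_W \phi_{WW} T_W^{*}) L_W^{-1} U^{\dagger} L_W^{-1} (T_W \phi_{WW} T_W^{*}) U T_W,
\]
each bracketed factor collapses to $L_W$, cancels with the adjacent $L_W^{-1}$, and leaves $\phi_{WW} T_W^{*} U U^{\dagger} U T_W = \mathcal{G}_{U,W,W}$; the symmetric identity $\mathcal{G}_{L_W^{-1} U^{\dagger} L_W^{-1},W,W}\, \mathcal{G}_{U,W,W}\, \mathcal{G}_{L_W^{-1} U^{\dagger} L_W^{-1},W,W} = \mathcal{G}_{L_W^{-1} U^{\dagger} L_W^{-1}, W, W}$ falls out the same way from $U^{\dagger} U U^{\dagger} = U^{\dagger}$. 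The remaining two Moore-Penrose conditions are equivalent to the range/kernel matchings
\[
\range{\mathcal{G}_{L_W^{-1} U^{\dagger} L_W^{-1}, W, W}} = \range{\mathcal{G}_{U,W,W}^{*}}, \qquad \kernel{\mathcal{G}_{L_W^{-1} U^{\dagger} L_W^{-1}, W, W}} = \kernel{\mathcal{G}_{U,W,W}^{*}}.
\]
Unwinding the two sides using that $T_W \phi_{WW}$ is surjective (since $T_W \phi_{WW} T_W^{*} = L_W$ is invertible on $\mathcal{H}$) and that $\phi_{WW}$, $L_W$ are self-adjoint reduces these two identities precisely to the range equalities in (2). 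The direction $(1) \Rightarrow (2)$ reads the same range/kernel computation backwards, since the Moore-Penrose pseudo-inverse is characterized uniquely by these four conditions.

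For $(2) \Leftrightarrow (3)$, the implication $(3) \Rightarrow (2)$ is immediate since operator equalities imply range equalities. For $(2) \Rightarrow (3)$, I would apply Douglas's factorization theorem as in the original proof. From the first range equality in (2) there exists $C \in B(\mathcal{H})$ with $\phi_{WW} T_W^{*} L_W^{-1} U^{*} = T_W^{*} U^{*} C$; left-multiplying by $T_W$ gives $L_W L_W^{-1} U^{*} = S_W U^{*} C$, so $U^{*} C = S_W^{-1} U^{*}$, and substituting back yields $\phi_{WW} T_W^{*} L_W^{-1} U^{*} = T_W^{*} S_W^{-1} U^{*}$. The second identity in (3) is obtained in exactly the same fashion from the second range equality.

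The main obstacle will be the careful bookkeeping of the positions of the $L_W^{-1}$ factors and the uses of self-adjointness of $\phi_{WW}$ and $L_W$ in the range/kernel reductions; once the cancellation pattern $(T_W \phi_{WW} T_W^{*}) L_W^{-1} = I_{\mathcal{H}}$ is recognized as the correct substitute for the duality identity, the remainder is a routine adaptation of the proof of Theorem \ref{pse1}.
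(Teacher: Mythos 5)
Your strategy is exactly what the paper intends: its own ``proof'' of this theorem is just the remark that one argues as in Theorem \ref{pse1}, and your substitutions --- $T_W\phi_{WW}T_W^{*}=L_W$ in place of the duality identity, the verification of $\mathcal{G}_{U,W,W}\,X\,\mathcal{G}_{U,W,W}=\mathcal{G}_{U,W,W}$ and $X\,\mathcal{G}_{U,W,W}\,X=X$ for the candidate $X=\mathcal{G}_{L_W^{-1}U^{\dagger}L_W^{-1},W,W}$, and the reduction of the remaining pseudo-inverse conditions (using that $T_W\phi_{WW}$ is surjective, $\phi_{WW}T_W^{*}L_W^{-1}$ is injective, and $\phi_{WW},L_W$ are self-adjoint) to the two range equalities $\range{\phi_{WW}T_W^{*}L_W^{-1}U^{*}}=\range{T_W^{*}U^{*}}$ and $\range{\phi_{WW}T_W^{*}U}=\range{T_W^{*}L_W^{-1}U}$ --- are all correct and give the sensible reading of item (2).

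The gap is in $(2)\Rightarrow(3)$: the second identity of (3) does \emph{not} come out ``in exactly the same fashion''. From $\range{\phi_{WW}T_W^{*}U}\subseteq\range{T_W^{*}L_W^{-1}U}$, Douglas's theorem gives $\phi_{WW}T_W^{*}U=T_W^{*}L_W^{-1}UC$; applying $T_W$ on the left yields $L_WU=S_WL_W^{-1}UC$, hence $L_W^{-1}UC=S_W^{-1}L_WU$ and therefore $\phi_{WW}T_W^{*}U=T_W^{*}S_W^{-1}L_WU$, not $T_W^{*}S_W^{-1}U$. In the first identity the $L_W$ produced by $T_W\phi_{WW}T_W^{*}$ is absorbed by the $L_W^{-1}$ already standing next to $U^{*}$; in the second slot there is nothing to absorb it. Indeed the printed identity $\phi_{WW}T_W^{*}U=T_W^{*}S_W^{-1}U$ forces $L_WU=U$ (apply $T_W$ on the left), which does not follow from (1): for $U=I_{\Hil}$ and a fusion Riesz basis that is not a fusion orthonormal basis, (1) holds by Theorem \ref{sec:structinvGram1} together with Corollary \ref{Wprime}, while $L_W=S_{W'}\neq I_{\Hil}$. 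So you must either derive the corrected identity $\phi_{WW}T_W^{*}U=T_W^{*}S_W^{-1}L_WU$ (flagging the apparent typo in the statement) or supply an argument you do not have. For the same reason your claim that $(3)\Rightarrow(2)$ is ``immediate since operator equalities imply range equalities'' is too quick: the right-hand sides in (3) are $T_W^{*}S_W^{-1}U^{*}$ and $T_W^{*}S_W^{-1}U$, not the operators $T_W^{*}U^{*}$ and $T_W^{*}L_W^{-1}U$ appearing in (2), so an additional comparison of these ranges is still required.
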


\section{Stability of $U$-cross Gram Matrices of Operators} \label{sec:stabgram0}
In this section, we state a general stability for the invertibility of
$U$-fusion cross Gram matrices, compare to the results on the invertibility of multipliers \cite{balsto09new}.

\begin{thm}\label{stability}
Let $W=\{(W_{i},\upsilon_{i})\}_{i\in I}$ be a Bessel fusion
sequence and $U_{1}$, $U_{2}\in B(\mathcal{H})$ with
$\left\|U_{1}-U_{2}\right\|<\mu$. Also, let
$V=\{(V_{i},\upsilon_{i})\}_{i\in I}$ and
$Z=\{(Z_{i},\upsilon_{i})\}_{i\in I}$ be  fusion frames on
$\mathcal{H}$ such that $\mathcal{G}_{U_{1},W,V}$ is invertible and
\begin{equation}\label{purb}
\left(\sum_{i\in I} \upsilon_i^2 \left\|\pi_{Z_{i}}f-\pi_{V_{i}}f\right\|^{2}\right)^{\frac{1}{2}}\leq
\lambda_{1}\left( \sum_{i\in I} {\upsilon_i^2}  \left\|\pi_{Z_{i}}f\right\|^{2}\right)^{\frac{1}{2}}+\lambda_{2}\left(\sum_{i\in I} {\upsilon_i^2} \left\|\pi_{V_{i}}f\right\|^{2}\right)^{\frac{1}{2}}+\epsilon \|f\|,
\end{equation}
for all $f\in \mathcal{H}$, in addition
\begin{equation}\label{p}
\mu+   \left(\lambda_{1}+\lambda_{2}+\frac{\epsilon}{\sqrt{B}}\right)\left(
\sqrt{B}\left\|S_{Z}^{-1}\right\|\left(\sum_{i\in I} \left|\upsilon_i\right|^{2}\right)^{\frac{1}{2}}\left\|U_{2}\right\|+\left\|U_{2}\right\|
\right)<\frac{\left\|\mathcal{G}_{U_{1},W,V}^{-1}\right\|^{-1}}{B
\left\|S_{V}^{-1}\right\|},
\end{equation}
where $\epsilon>0$,
$B=max\left\{B_{W},B_{V},B_{Z}\right\}$ and $\sum_{i\in I}|\upsilon_i|^2<\infty$. Then
$\mathcal{G}_{U_{2},W,Z}$ is also invertible.
\end{thm}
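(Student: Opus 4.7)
My plan is to invoke Proposition~\ref{invOP} with $F=\mathcal{G}_{U_1,W,V}$, which is invertible by hypothesis, and $G=\mathcal{G}_{U_2,W,Z}$. It then suffices to produce an estimate
\[
\left\|\bigl(\mathcal{G}_{U_1,W,V}-\mathcal{G}_{U_2,W,Z}\bigr)h\right\|\ \leq\ \upsilon\,\|h\|,\qquad h\in\ltiw,
\]
with $\upsilon<\|\mathcal{G}_{U_1,W,V}^{-1}\|^{-1}$. Inspection of~\eqref{p} suggests the target $\upsilon=B\|S_V^{-1}\|$ times the bracket on the left-hand side of~\eqref{p}. The natural starting point is the telescoping identity
\[
\mathcal{G}_{U_1,W,V}-\mathcal{G}_{U_2,W,Z}=\phi_{WV}T_V^{*}(U_1-U_2)T_W+\bigl(\phi_{WV}T_V^{*}-\phi_{WZ}T_Z^{*}\bigr)U_2T_W.
\]
The first summand is immediate: combining $\|\phi_{WV}\|\leq\|S_V^{-1}\|$, $\|T_V^{*}\|\leq\sqrt{B_V}$, $\|T_W\|\leq\sqrt{B_W}$, and $\|U_1-U_2\|<\mu$ gives the bound $B\|S_V^{-1}\|\mu$, which accounts exactly for the $\mu$-term in~\eqref{p}.

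The real work is the second summand. Since $\phi_{WV}$ and $\phi_{WZ}$ live on different domains, I would not difference them directly but expand pointwise: the $i$th coordinate of $(\phi_{WV}T_V^{*}-\phi_{WZ}T_Z^{*})f$ rewrites as
\[
\upsilon_i\pi_{W_i}S_V^{-1}(\pi_{V_i}-\pi_{Z_i})f+\upsilon_i\pi_{W_i}(S_V^{-1}-S_Z^{-1})\pi_{Z_i}f,
\]
after which Minkowski separates the two pieces. For the first piece, pulling $\|S_V^{-1}\|$ out of each projection and combining \eqref{purb} with $\|T_V^{*}f\|,\|T_Z^{*}f\|\leq\sqrt{B}\|f\|$ yields $\|S_V^{-1}\|\sqrt{B}(\lambda_1+\lambda_2+\epsilon/\sqrt{B})\|f\|$, which explains the appearance of the combination $\lambda_1+\lambda_2+\epsilon/\sqrt{B}$ in~\eqref{p}. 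For the second piece I would invoke the resolvent identity $S_V^{-1}-S_Z^{-1}=S_V^{-1}(S_Z-S_V)S_Z^{-1}$ and estimate the middle factor by the triangle inequality and Cauchy--Schwarz:
\[
\|(S_Z-S_V)g\|\leq\sum_{i\in I}\upsilon_i^{2}\|(\pi_{Z_i}-\pi_{V_i})g\|\leq\Bigl(\sum_{i\in I}|\upsilon_i|^{2}\Bigr)^{1/2}\Bigl(\sum_{i\in I}\upsilon_i^{2}\|(\pi_{Z_i}-\pi_{V_i})g\|^{2}\Bigr)^{1/2},
\]
where Cauchy--Schwarz is exactly what introduces the factor $\bigl(\sum_{i}|\upsilon_i|^2\bigr)^{1/2}$ (and makes the hypothesis $\sum_i|\upsilon_i|^2<\infty$ necessary), while~\eqref{purb} bounds the remaining square-summable factor by $\sqrt{B}(\lambda_1+\lambda_2+\epsilon/\sqrt{B})\|g\|$.

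Combining the two pieces with the trivial bound $\sum_i\upsilon_i^{2}\|\pi_{Z_i}f\|^{2}\leq B\|f\|^{2}$, substituting $f=U_2T_Wh$ (so that $\|f\|\leq\|U_2\|\sqrt{B}\|h\|$), and factoring out $B\|S_V^{-1}\|$ reproduces exactly the bracket on the left-hand side of~\eqref{p}. Hypothesis~\eqref{p} then gives $\upsilon<\|\mathcal{G}_{U_1,W,V}^{-1}\|^{-1}$, and Proposition~\ref{invOP} yields the invertibility of $\mathcal{G}_{U_2,W,Z}$. I expect the principal obstacle to be finding the correct decomposition of $\phi_{WV}T_V^{*}-\phi_{WZ}T_Z^{*}$ so that the perturbation assumption~\eqref{purb} enters cleanly: the combination of coordinate-wise expansion, the resolvent identity for $S_V^{-1}-S_Z^{-1}$, and Cauchy--Schwarz is precisely what routes the argument through the summability condition $\sum_i|\upsilon_i|^2<\infty$ and produces all of the constants visible in~\eqref{p}.
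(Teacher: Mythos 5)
Your proposal is correct and follows essentially the same route as the paper's proof: the same application of Proposition~\ref{invOP} to $F=\mathcal{G}_{U_1,W,V}$ and $G=\mathcal{G}_{U_2,W,Z}$, the same telescoping split of the difference, the same coordinatewise decomposition $S_V^{-1}\pi_{V_i}-S_Z^{-1}\pi_{Z_i}=S_V^{-1}(\pi_{V_i}-\pi_{Z_i})+(S_V^{-1}-S_Z^{-1})\pi_{Z_i}$, and the same resolvent-identity-plus-Cauchy--Schwarz treatment of $S_V^{-1}-S_Z^{-1}$ feeding into \eqref{purb} and \eqref{p}. No substantive differences to report.
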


\begin{proof}
First note that
 \begin{eqnarray*}
 \left\|\left(S_V-S_Z\right)f\right\|&=&\left\|\sum_{i\in I}\upsilon_i^2\left(\pi_{V_i}f-\pi_{Z_i}f\right)\right\|\\
 &\leq&\  \left(\sum_{i\in I}\left|\upsilon_{i}\right|^2\right)^\frac{1}{2}\left(\sum_{i\in I} \upsilon_{i}^2 \left\|\left(\pi_{V_i}f-\pi_{Z_i}f\right)\right\|^2\right)^\frac{1}{2},
 \end{eqnarray*}
 for all $f\in \mathcal{H}$. Therefore,
 \begin{eqnarray*}
&&\left\|\left(S_V^{-1}-S_Z^{-1}\right)f\right\|\\
&=&\left\|S_V^{-1}\left(S_V-S_Z\right)S_Z^{-1}f\right\|\\
&\leq&\left\|S_{V}^{-1}\right\|\left\|\left(S_V-S_Z\right)S_Z^{-1}f\right\|\\
&\leq&\left\|S_{V}^{-1}\right\|\left(\sum_{i\in I} \left|\upsilon_{i}\right|^2\right)^\frac{1}{2}\left(\sum_{i\in I} \left\|\upsilon_{i}^2 \left(\pi_{V_i}-\pi_{Z_i}\right)S^{-1}_Zf\right\|^2\right)^\frac{1}{2}\\
&\leq&\left\|S_{V}^{-1}\right\|\left(\sum_{i\in I}\left|\upsilon_{i}\right|^2\right)^\frac{1}{2}\\
&&\left(\lambda_{1}\left(\sum_{i\in I} \upsilon_{i}^2 \left\|\pi_{Z_i}S_Z^{-1}f\right\|^2\right)^
\frac{1}{2}
+\lambda_{2}\left(\sum_{i\in I} \upsilon_{i}^2  \left\|\pi_{V_i}S_Z^{-1}f\right\|^2\right)^
\frac{1}{2}+\epsilon\left\|S_Z^{-1}f\right\|\right)\\
&\leq&\left\|S_V^{-1}\right\|\left(\sum_{i\in I}\left|\upsilon_{i}\right|^2\right)^\frac{1}{2}\left(\lambda_1\sqrt{B_Z}+\lambda_{2}\sqrt{B_{V}}+\epsilon\right)\left\|S_{Z}^{-1}\right\|\|f\|.
 \end{eqnarray*}

 Using the assumption (\ref{purb}) and the above computations imply that
 \begin{eqnarray*}
&&\left(\sum_{i\in I} \upsilon_i^2 \left\|(S_{V}^{-1}\pi_{V_i}- S_{Z}^{-1}\pi_{Z_i})f\right\|^2\right)^{\frac{1}{2}}\\
&\leq&\left(\sum_{i\in I} \upsilon_i^2 \left(\left\|(S_{V}^{-1}\pi_{V_i}- S_{V}^{-1}\pi_{Z_i})f\right\|+\left\|(S_{V}^{-1}\pi_{Z_i}- S_{Z}^{-1}\pi_{Z_i})f\right\|\right)^{2}\right)^{\frac{1}{2}}\\
&\leq&\left(\sum_{i\in I} \upsilon_i^2 \left(\left\|S_{V}^{-1}\right\|\left\|\pi_{V_i}f- \pi_{Z_i}f\right\|+\left\|S_{V}^{-1}- S_{Z}^{-1}\right\|\|\pi_{Z_i}f\|\right)^{2}\right)^{\frac{1}{2}} \\
&\leq& {  \left[\left(\sum_{i\in I} \upsilon_i^2 \left\|S_{V}^{-1}\right\|^2\left\|\pi_{V_i}f- \pi_{Z_i}f\right\|^2\right)^{\frac{1}{2}}+\left\|S_{V}^{-1}- S_{Z}^{-1}\right\|\left(\sum_{i\in I} \upsilon_i^2\left\|\pi_{Z_i}f\right\|^2\right)^{\frac{1}{2}}  \right]}\\
 \end{eqnarray*}

\begin{eqnarray*}
&\leq&  \left[  \left\|S_V^{-1}\right\|\left(\lambda_1\sqrt{B_Z}+\lambda_{2}\sqrt{B_{V}}+\epsilon\right)\|f\|+ \right. \\
&& \left. \left\|S_V^{-1}\right\|\left\|S_Z^{-1}\right\|\sqrt{B_{Z}}\left(\sum_{i\in I}\left|\upsilon_{i}\right|^2\right)^\frac{1}{2}\left(\lambda_1\sqrt{B_Z}+\lambda_{2}\sqrt{B_{V}}+\epsilon\right)\|f\| \right]\\
&\leq&  \left\|S_V^{-1}\right\|\left(\lambda_1\sqrt{B_Z}+\lambda_{2}\sqrt{B_V}+\epsilon\right)\left(1+\left\|S_Z^{-1}\right\|\sqrt{B_{Z}}\left(\sum_{i\in
I}\left|\upsilon_{i}\right|^2\right)^\frac{1}{2}\right)\|f\|.
 \end{eqnarray*}
 Finally, applying (\ref{p}) we obtain
 \begin{eqnarray*}
&&\left\|\mathcal{G}_{U_{1},W,V}-\mathcal{G}_{U_{2},W,Z}\right\|\\
&=&\left\|\phi_{WV}T_{V}^{*}U_{1}T_{W}-\phi_{WZ}T_{Z}^{*}U_{2}T_{W}\right\|\\
&\leq&\left\|\phi_{WV}T_{V}^{*}U_{1}T_{W}-\phi_{WV}T_{V}^{*}U_{2}T_{W}\right\|+\left\|
\phi_{WV}T_{V}^{*}U_{2}T_{W}-\phi_{WZ}T_{Z}^{*}U_{2}T_{W}\right\|\\
&\leq&\left\|S_{V}^{-1}\right\|\sqrt{B_{W}B_{V}}\left\|U_{1}-U_{2}\right\|+
\left(\sum_{i\in I}\left\| \upsilon_i \pi_{W_i}(S_{V}^{-1}\pi_{V_i}-
S_{Z}^{-1}\pi_{Z_i})U_2T_W
\right\|^2\right)^{\frac{1}{2}}\\
&\leq&\left\|S_{V}^{-1}\right\|\sqrt{B_{W}B_{V}}\mu+ \left[ \sum_{i\in I} \upsilon_i^2 \left\|(S_{V}^{-1}\pi_{V_i}- S_{Z}^{-1}\pi_{Z_i}) \right\|^2 \right]^{\frac{1}{2} }\|U_2\|\sqrt{B_{W}}\\
&\leq&\left\|S_{V}^{-1}\right\|\sqrt{B_{W}B_{V}}\mu+  \\
&&\left\|S_V^{-1}\right\|\left(\lambda_1\sqrt{B_Z}+\lambda_{2}\sqrt{B_{V}}+\epsilon\right)\left(1+\left\|S_Z^{-1}\right\|\sqrt{B_{Z}}\left(\sum_{i\in I}\left|\upsilon_{i}\right|^2\right)^\frac{1}{2}\right)\|U_2\|\sqrt{B_{W}}\\
&<&\left\|\mathcal{G}_{U_{1},W,V}^{-1}\right\|^{-1}.
 \end{eqnarray*}
Therefore,  $\mathcal{G}_{U_{2},W,Z}$ is also invertible by Proposition \ref{invOP}.
\end{proof}

\begin{rem} It is worthwhile to mention that if we consider in Theorem  \ref{stability}
\begin{enumerate}
\item  the perturbation condition
\begin{equation*}
\left\|\pi_{Z_{i}}f-\pi_{V_{i}}f\right\| \leq
\lambda_{1}  \upsilon_i \left\|\pi_{Z_{i}}f\right\|+\lambda_{2}{\upsilon_i} \left\|\pi_{V_{i}}f\right\|+\epsilon \|f\|
\end{equation*}
  we can replace the assumption $\sum_{i\in I}|\upsilon_i|^2<\infty$ by bounded weights. Hence,
uniform fusion frames satisfy a slightly different version of this theorem.

\item the condition
\begin{equation*}\label{purb_alt}
\left(\sum_{i\in I} \left\|\pi_{Z_{i}}f-\pi_{V_{i}}f\right\|^{2}\right)^{\frac{1}{2}}\leq
\lambda_{1}\left( \sum_{i\in I} \upsilon_i^2  \left\|\pi_{Z_{i}}f\right\|^{2}\right)^{\frac{1}{2}}+\lambda_{2}\left(\sum_{i\in I}  \upsilon_i^2 \left\|\pi_{V_{i}}f\right\|^{2}\right)^{\frac{1}{2}}+\epsilon \|f\|,
\end{equation*}  instead of (3.4)
we get the same result substituting $\sum_{i\in I}|\upsilon_i|^4$ for $\sum_{i\in I}|\upsilon_i|^2$.

\end{enumerate}
\end{rem}

As a consequence we obtain the following result.
 \begin{cor}
Let $W=\{(W_{i},\upsilon_i)\}_{i\in I}$ be a fusion Riesz basis with bounds
$A_{W}$ and
 $B_{W}$ and $Z=\{(Z_{i},\upsilon_i)\}_{i\in I}$  a  fusion frame in
 $\mathcal{H}$ such that  (\ref{purb}) holds. Also, $U\in B(\mathcal{H})$ with $\left\|U-I_{\mathcal{H}}\right\|<\mu$. If
\begin{equation*}
\mu+\left(\lambda_{1}+\lambda_{2}+\frac{\epsilon}{\sqrt{B}}\right)\left(
\sqrt{B}\left\|S_{Z}^{-1}\right\|\left(\sum_{i\in I}|\upsilon_i|^2\right)^{\frac{1}{2}}\|U\|+\|U\|
\right)<\frac{A_{W}}{B \left\|S_{W}^{-1}\right\|},
\end{equation*}
where  $\epsilon>0$ and
$B=max\left\{B_{W},B_{Z}\right\}$ and $\sum_{i\in I}|\upsilon_i|^2<\infty$. Then $\mathcal{G}_{U,W,Z}$
is also invertible.
\end{cor}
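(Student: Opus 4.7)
The plan is to invoke Theorem \ref{stability} with the specialization $V := W$, $U_1 := I_{\mathcal{H}}$, and $U_2 := U$. With these substitutions, the perturbation hypothesis $\|U_1 - U_2\| = \|I_{\mathcal{H}} - U\| < \mu$ matches the given bound on $\|U - I_{\mathcal{H}}\|$, and the subspace perturbation condition \eqref{purb} between $Z$ and $V = W$ is exactly the one supplied in the corollary. The weight sequence condition $\sum_i |\upsilon_i|^2 < \infty$ is assumed in both settings.

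Two prerequisites of Theorem \ref{stability} then need to be checked. The first, invertibility of $\mathcal{G}_{U_1,W,V} = \mathcal{G}_{I_{\mathcal{H}},W,W}$, is immediate from Proposition \ref{INV}: $W$ is a fusion Riesz basis and $I_{\mathcal{H}}$ is trivially invertible, so $\mathcal{G}_{I_{\mathcal{H}},W,W}$ is invertible. The second is that the quantitative smallness bound \eqref{p} be implied by the corollary's hypothesis. Since the only difference between the two inequalities is that $A_W$ replaces $\|\mathcal{G}_{I_{\mathcal{H}},W,W}^{-1}\|^{-1}$ on the right-hand side, it suffices to show $\|\mathcal{G}_{I_{\mathcal{H}},W,W}^{-1}\|^{-1} \geq A_W$, i.e.\ $\|\mathcal{G}_{I_{\mathcal{H}},W,W}^{-1}\| \leq 1/A_W$.

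The main technical obstacle is therefore this inverse norm estimate. I would attack it via the explicit inverse formula $\mathcal{G}_{I_{\mathcal{H}},W,W}^{-1} = \mathcal{G}_{S_{W^{\prime}}^{-2},W,W}$ furnished by Theorem \ref{sec:structinvGram1}(1), combined with the identification $L_W = S_{W^{\prime}}$ of Corollary \ref{Wprime} and the weight inequality $\sqrt{A_W}\leq \omega_i \leq \sqrt{B_W}$, which holds because $A_W \leq \omega_i^2 \|\pi_{W_i} f\|^2 \leq \sum_j \omega_j^2 \|\pi_{W_j}f\|^2 \leq B_W$ for any unit vector $f \in W_i$. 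Once the bound $\|\mathcal{G}_{I_{\mathcal{H}},W,W}^{-1}\| \leq 1/A_W$ is established, Theorem \ref{stability} applies and yields the invertibility of $\mathcal{G}_{U,W,Z}$, which is the conclusion of the corollary.
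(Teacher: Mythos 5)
Your overall strategy -- specializing Theorem \ref{stability} to $V=W$, $U_1=I_{\mathcal{H}}$, $U_2=U$, checking the invertibility of $\mathcal{G}_{I_{\mathcal{H}},W,W}$ via Proposition \ref{INV}, and then reducing everything to the single inequality $\left\|\mathcal{G}_{I_{\mathcal{H}},W,W}^{-1}\right\|\leq 1/A_W$ -- is exactly the intended route (the paper states the corollary ``as a consequence'' of Theorem \ref{stability} without proof). The gap is that you never actually establish that inequality: you only announce how you ``would attack it,'' and the attack does not go through. From $\mathcal{G}_{I,W,W}^{-1}=\mathcal{G}_{S_{W'}^{-2},W,W}=\phi_{WW}T_W^*S_{W'}^{-2}T_W$ together with $\|\phi_{WW}\|\leq 1/\inf_i\omega_i^2$, $\|T_W\|,\|T_W^*\|\leq\sqrt{B_W}$ you only get $\left\|\mathcal{G}_{I,W,W}^{-1}\right\|\leq B_W\left\|S_{W'}^{-1}\right\|^2/A_W$, and the sharper factorization $\mathcal{G}_{I,W,W}^{-1}=(T_W^*T_W)^{-1}\phi_{WW}^{-1}$ (using \eqref{**}) only gives $\left\|\mathcal{G}_{I,W,W}^{-1}\right\|\leq \sup_i\omega_i^2\,\left\|(T_W^*T_W)^{-1}\right\|\leq B_W/A_W$; neither is $\leq 1/A_W$ unless the weights are bounded by $1$. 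Indeed the inequality you need is false in general: take $\mathcal{H}=\mathbb{C}$, a single subspace $W_1=\mathcal{H}$ with weight $\omega_1=2$. Then $A_W=B_W=4$, while a direct computation gives $\mathcal{G}_{I,W,W}=I$, so $\left\|\mathcal{G}_{I,W,W}^{-1}\right\|^{-1}=1<A_W$. So the reduction ``corollary hypothesis $\Rightarrow$ condition \eqref{p}'' breaks down at precisely the step you deferred; with general weights one can only replace $A_W$ by something like $A_W/\sup_i\omega_i^2$ (or argue under a normalization such as $\omega_i\leq 1$), and as stated your argument does not prove the corollary.

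A secondary, smaller point: your justification of $\sqrt{A_W}\leq\omega_i\leq\sqrt{B_W}$ is not correct as written. The chain ``$A_W\leq\omega_i^2\|\pi_{W_i}f\|^2$ for a unit $f\in W_i$'' does not follow from the fusion frame inequality, which only bounds the \emph{sum} over $i$ from below. The lower bound on the weights comes instead from the Riesz inequality \eqref{defriesz} applied to the singleton $J=\{i\}$ (giving $\sqrt{C}\leq\omega_i\leq\sqrt{D}$, as the paper remarks), together with the observation that for a bijective synthesis operator the optimal Riesz bounds of $T_W^*T_W$ coincide with the optimal frame bounds of $S_W=T_WT_W^*$. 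This repair is available, but it does not rescue the main estimate discussed above.
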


\section{Acknowledgement}
The first and the second authors  were supported in part by the Iranian National Science Foundation (INSF) under Grant 97018155.
The last author was partly supported by the
START project FLAME Y551-N13
of the Austrian Science Fund (FWF) and the DACH project BIOTOP
I-1018-N25 of Austrian Science Fund (FWF). He also thanks Nora Simovich for help with typing.\\

\small

\end{document}